


\documentclass[leqno,twoside]{amsart}

\usepackage{latexsym,esint}

\theoremstyle{plain}

\def\endproof{\hspace*{\fill}\mbox{\ \rule{.1in}{.1in}}\medskip }

\newtheorem{theorem}{Theorem}[section]

\newtheorem{lemma}[theorem]{Lemma}
\newtheorem{proposition}[theorem]{Proposition}

\theoremstyle{definition}
\newtheorem{example}[theorem]{Example}
\newtheorem{remark}[theorem]{Remark}

\numberwithin{equation}{section}
\numberwithin{figure}{section}

\begin{document}

\title[optimal constants in rigidity estimates]
{On the optimal constants in Korn's and geometric rigidity
  estimates, \\in bounded and unbounded domains,\\
under Neumann boundary conditions}
\author{Marta Lewicka and Stefan M\"uller}
\address{University of Pittsburgh, Department of Mathematics, 
301 Thackeray Hall, Pittsburgh, PA 15260, USA}
\email{lewicka@pitt.edu}
\address{Institut f\"ur Angewandte Mathematik, Universit\"at Bonn,
  Endenicher Allee 60,  53115 Bonn, Germany}
\email{stefan.mueller@hcm.uni-bonn.de}
\subjclass{74B05}
\keywords{Korn's inequality, geometric rigidity
  estimate, optimal constant, linear elasticity, nonlinear elasticity}

\maketitle


\begin{abstract} 
We are concerned with the optimal constants:  in the Korn inequality
under tangential boundary conditions on bounded sets $\Omega \subset
\mathbb{R}^n$, and in the geometric
rigidity estimate on the whole $\mathbb{R}^2$. We prove that the latter
constant equals $\sqrt{2}$, and we discuss the relation of the former
constants with the optimal Korn's constants under Dirichlet boundary
conditions, and in the whole $\mathbb{R}^n$, which are well known to equal
$\sqrt{2}$. We also discuss the attainability of these constants and
the structure of deformations/displacement fields in the optimal sets.
\end{abstract}

\section{Introduction and the main results} 

In this paper we are concerned with the optimal constants in the Korn
inequality \cite{korn1, korn2} and in the Friesecke-James-M\"uller
geometric rigidity estimate \cite{FJMgeo, FJMhier}.

Let $\Omega$ be an open, bounded, and connected subset
of $\mathbb{R}^n$ with Lipschitz continuous boundary. 
The Korn inequality \cite{korn1, korn2, OK} states
that there exists a constant $C(\Omega)$ depending only on
$\Omega$, such that for all $u\in W^{1,2}(\Omega,\mathbb{R}^n)$ there
holds:
\begin{equation}\label{korn_general}
\min_{A\in so(n)} \|\nabla u - A\|_{L^2(\Omega)} \leq C(\Omega)
\|D(u)\|_{L^2(\Omega)},
\end{equation}
where by $D(u)= \frac{1}{2} \left(\nabla u + (\nabla u)^T\right)$ 
we mean the symmetric part of $\nabla u$.

Let now $\vec n$ denote the outward unit normal on $\partial\Omega$.  
Given (\ref{korn_general}), it is not hard to deduce (see Lemma
\ref{projection}) the following
version of Korn's inequality subject to tangential boundary
conditions. Namely, there exists a constant $\kappa(\Omega)$,
depending only on $\Omega$, such that for all
$u\in W^{1,2}(\Omega,\mathbb{R}^n)$ satisfying $u\cdot \vec n =0$ 
on $\partial\Omega$ there holds:
\begin{equation}\label{korn}
\min_{A\in L_\Omega} \|\nabla u - A\|_{L^2(\Omega)} \leq \kappa(\Omega)
\|D(u)\|_{L^2(\Omega)},
\end{equation}
where by $L_\Omega$  above we denote the linear space of 
skew-symmetric matrices that are gradients of affine maps tangential 
on the boundary of $\Omega$:
$$L_\Omega = \left\{A\in so(n); ~~ \exists a\in \mathbb{R}^n \quad 
\forall x\in\partial\Omega \quad (Ax + a)\cdot\vec n(x) = 0\right\}.$$
The optimal constant in (\ref{korn}) is given by:
\begin{equation}\label{def_kappa}
\begin{split}
\kappa(\Omega) = \sup\Bigg\{\min_{A\in L_\Omega}\|\nabla u - A\|_{L^2(\Omega)};
~~ & u\in W^{1,2}(\Omega,\mathbb{R}^n), ~ u\cdot\vec n = 0 
\mbox{ on } \partial\Omega\\ 
& \mbox{ and } \|D(u)\|_{L^2(\Omega)}=1  \Bigg\},
\end{split}
\end{equation}
and we aim to study its relation to Korn's constant in the
whole $\mathbb{R}^n$, which is $\sqrt{2}$ (see Lemma \ref{lem222}):
\begin{equation}\label{def_n}
\kappa(\mathbb{R}^n) = \sup\Bigg\{\|\nabla u\|_{L^2(\mathbb{R}^n)};
~~  u\in W^{1,2}(\mathbb{R}^n,\mathbb{R}^n), ~
\|D(u)\|_{L^2(\mathbb{R}^n)}=1  \Bigg\} = \sqrt{2}.
\end{equation}
In this setting, our first set of main results is:
\begin{theorem}\label{th_uno}
For any open, bounded, Lipschitz, connected $\Omega\subset\mathbb{R}^n$:
\begin{equation}\label{ineq}
\kappa(\Omega)\geq \kappa(\mathbb{R}^n) =\sqrt{2}.
\end{equation}
\end{theorem}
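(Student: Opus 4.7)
The plan is to construct, out of an approximate extremizer of the whole-space Korn inequality, an admissible competitor for the supremum (\ref{def_kappa}) that is compactly supported inside $\Omega$. Compact support both makes the tangential boundary condition automatic and---crucially---reduces the minimization over $L_\Omega$ to the trivial choice $A=0$, so that the Korn quotient on $\Omega$ collapses to the whole-space Korn quotient of the underlying map.

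Concretely, fix $\delta>0$. Since $\kappa(\mathbb{R}^n)=\sqrt{2}$ and $C_c^\infty(\mathbb{R}^n,\mathbb{R}^n)$ is dense in $W^{1,2}(\mathbb{R}^n,\mathbb{R}^n)$, the continuity of $u\mapsto\|\nabla u\|_{L^2}$ and $u\mapsto\|D(u)\|_{L^2}$ produces a nonzero $u\in C_c^\infty(\mathbb{R}^n,\mathbb{R}^n)$ with $\|\nabla u\|_{L^2(\mathbb{R}^n)}/\|D(u)\|_{L^2(\mathbb{R}^n)}\geq \sqrt{2}-\delta$. Next, pick an interior ball $B(x_0,r)\subset\Omega$ and $\epsilon>0$ small enough that $v(x):=u((x-x_0)/\epsilon)$ has support in $B(x_0,r)$. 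Then $v\in C_c^\infty(\Omega,\mathbb{R}^n)$ is admissible in (\ref{def_kappa}), since $v\cdot\vec n\equiv 0$ on $\partial\Omega$ trivially, and a change of variables shows that $\|\nabla v\|_{L^2(\Omega)}$ and $\|D(v)\|_{L^2(\Omega)}$ pick up the same prefactor, so the Korn ratio is preserved.

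The decisive observation is that $\int_\Omega\nabla v\,dx=0$ by the divergence theorem, since $v$ is compactly supported in $\Omega$. Therefore the cross term in the expansion
\[
\|\nabla v-A\|_{L^2(\Omega)}^2 \;=\; \|\nabla v\|_{L^2(\Omega)}^2 - 2A : \int_\Omega \nabla v\,dx + |A|^2|\Omega| \;=\; \|\nabla v\|_{L^2(\Omega)}^2 + |A|^2|\Omega|
\]
vanishes for every matrix $A$, so the minimum on $L_\Omega$ is attained uniquely at $A=0 \in L_\Omega$ and equals $\|\nabla v\|_{L^2(\Omega)}^2$. Consequently
\[
\kappa(\Omega) \;\geq\; \frac{\|\nabla v\|_{L^2(\Omega)}}{\|D(v)\|_{L^2(\Omega)}} \;=\; \frac{\|\nabla u\|_{L^2(\mathbb{R}^n)}}{\|D(u)\|_{L^2(\mathbb{R}^n)}} \;\geq\; \sqrt{2}-\delta,
\]
and sending $\delta\to 0$ completes the argument.

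No step presents a substantive obstacle; the key ingredient---that the gradient of a compactly supported vector field integrates to zero---is a one-line consequence of integration by parts, and both the density approximation and the scaling computation are standard.
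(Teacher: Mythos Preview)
Your proof is correct and in fact cleaner than the paper's. Both arguments plant a compactly supported competitor inside $\Omega$ and then show that the minimizing $A\in L_\Omega$ is (or tends to) zero, but they reach that point by different routes. The paper starts from an arbitrary $u\in W^{1,2}(\mathbb{R}^n,\mathbb{R}^n)$, rescales it to concentrate near a point of $\Omega$, and only then multiplies by a smooth cutoff $\phi$; this produces error terms $u_k\otimes\nabla\phi$, and to kill the projection $\mathbb{P}_{L_\Omega}\fint_\Omega\nabla v_k$ the paper invokes a separate technical estimate (Proposition~\ref{tech}) guaranteeing $\|\nabla v_k\|_{L^1(\Omega)}\to 0$. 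You instead approximate first in $W^{1,2}(\mathbb{R}^n)$ by a $C_c^\infty$ map and rescale afterward, so your competitor is genuinely compactly supported and $\int_\Omega\nabla v=0$ holds exactly, not just asymptotically; the minimization over $L_\Omega$ then collapses by a one-line expansion rather than a limiting argument. Your route avoids Proposition~\ref{tech} entirely. As a small further simplification, note that by the proof of Lemma~\ref{lem222} the constant $\sqrt{2}$ is actually \emph{attained} by any compactly supported divergence-free field, so you could dispense with the $\delta$-approximation altogether.
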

In fact, $\kappa(\Omega)$ may be arbitrarily large. 
In Example \ref{blowupi} we will recall our construction in \cite{LM} which
implies that for a sequence of thin shells around a sphere, the
Korn constants go to $\infty$  as the thickness goes to $0$. On the
other hand, as we show in Example \ref{square}, there is: $\kappa([0,1]^2) = \sqrt{2}$. \footnote{We are able
  to prove that for smooth domains there always holds: $\kappa(\Omega)
  > \sqrt{2}$. The proof of this fact will appear elsewhere. }
We however have:
\begin{theorem}\label{th_due}
Assume that there exists a sequence $\{u_k\}_{k=1}^\infty$, 
$u_k\in W^{1,2}(\Omega,\mathbb{R}^n)$, $u_k\cdot\vec n = 0$ on $\partial\Omega$,
with the following properties:
\begin{itemize}
\item[(i)] $ u_k$ converges to $0$ weakly in $W^{1,2}(\Omega,\mathbb{R}^n)$,
\item[(ii)] $\|D(u_k)\|_{L^2(\Omega)} = 1$,
\item[(iii)] $\lim_{k\to\infty}\|\nabla u_k\|_{L^2(\Omega)} = \kappa(\Omega)$.
\end{itemize}
Then $\kappa(\Omega) = \sqrt{2}$.
\end{theorem}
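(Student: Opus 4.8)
The strategy is to exploit the weak convergence $u_k \rightharpoonup 0$ to show that the extremal sequence "concentrates" and effectively sees only the local geometry, so that the problem reduces to Korn's inequality on $\mathbb{R}^n$, whose optimal constant is $\sqrt{2}$ by \eqref{def_n}. First I would record that, by Theorem \ref{th_uno}, it suffices to prove $\kappa(\Omega) \leq \sqrt{2}$, so I need only use the sequence $\{u_k\}$ to bound $\kappa(\Omega)$ from above. The key algebraic identity is the pointwise relation $|\nabla u|^2 = |D(u)|^2 + |W(u)|^2$, where $W(u) = \frac{1}{2}(\nabla u - (\nabla u)^T)$ is the skew part; integrating gives $\|\nabla u_k\|_{L^2}^2 = 1 + \|W(u_k)\|_{L^2}^2$, so property (iii) is equivalent to $\|W(u_k)\|_{L^2(\Omega)}^2 \to \kappa(\Omega)^2 - 1$. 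Thus the task is to show $\|W(u_k)\|_{L^2}^2 \leq 1 + o(1)$, i.e. asymptotically the skew part of the gradient is no larger than the symmetric part.

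The main mechanism I would use is a localization/cutoff argument combined with a compensated-compactness (div-curl type) observation. Because $u_k \rightharpoonup 0$ weakly in $W^{1,2}$, it converges strongly to $0$ in $L^2(\Omega)$ by Rellich; hence for any fixed smooth $\varphi$ with compact support in $\Omega$, the products $\varphi \nabla u_k$ behave, up to lower-order terms, like gradients of compactly supported maps $\varphi u_k$, and one can extend $\varphi u_k$ by zero to all of $\mathbb{R}^n$ and apply \eqref{def_n}. Concretely: fix a partition of unity; on each patch where $\Omega$ looks (after a rigid motion) like a half-space or where the patch is interior, apply either the $\mathbb{R}^n$ Korn inequality or a reflection argument across the flat piece of boundary to get the constant $\sqrt{2}$; the commutator terms $[\varphi, \nabla] u_k = u_k \otimes \nabla \varphi$ are $O(\|u_k\|_{L^2}) = o(1)$ by Rellich and do not affect the limiting constant. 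Summing the patches, using that $\sum \varphi_j^2$ can be arranged to be a partition of unity and that cross terms vanish in the limit, yields $\|W(u_k)\|_{L^2(\Omega)}^2 \leq \|D(u_k)\|_{L^2(\Omega)}^2 + o(1) = 1 + o(1)$, hence $\kappa(\Omega)^2 - 1 \leq 1$, i.e. $\kappa(\Omega) \leq \sqrt{2}$.

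The delicate point — and I expect it to be the principal obstacle — is handling the boundary patches, where one must use the tangential condition $u_k \cdot \vec n = 0$. On a curved boundary the normal $\vec n$ varies, so the naive reflection is not isometric and the tangential constraint is not preserved under extension; one has to either flatten the boundary by a bi-Lipschitz change of variables (which perturbs both $D(u)$ and $W(u)$ by lower-order terms controlled by the $C^1$-modulus of the chart, but $\Omega$ is only Lipschitz, so some care or an approximation of $\Omega$ is needed) or to argue that the concentration forced by $u_k \rightharpoonup 0$ pushes the relevant mass either into the interior or into regions where the boundary is asymptotically flat at the concentration scale. A clean way around the low regularity is to first prove the theorem for a dense class of displacement fields and domains and then pass to the limit; alternatively, one notes that any would-be loss of constant at the boundary would contradict the already-established lower bound $\kappa(\Omega) \geq \sqrt{2}$ combined with the upper bound from the interior estimate, pinning the value to exactly $\sqrt{2}$. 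I would also need the elementary fact (Lemma \ref{lem222}) that $\kappa(\mathbb{R}^n) = \sqrt{2}$, proved via Fourier transform: on the Fourier side $\widehat{\nabla u} = i\xi \otimes \hat u$, and the ratio $|\xi \otimes \hat u(\xi)|^2 / |\mathrm{sym}(\xi \otimes \hat u(\xi))|^2$ is bounded by $2$ pointwise with the bound approached when $\hat u(\xi) \perp \xi$.
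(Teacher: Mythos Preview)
Your overall plan is on the right track and parallels the paper's argument: both exploit the strong $L^2$ convergence $u_k\to 0$ (Rellich) to kill the commutator terms $u_k\otimes\nabla\varphi$ in a localization, reducing matters to the sharp constant $\sqrt{2}$ on $\mathbb{R}^n$. The paper packages this slightly differently, passing to the limiting \emph{defect measures} $\mu=\lim |\nabla u_k|^2\,\chi_\Omega\,dx$ and $\nu=\lim |D(u_k)|^2\,\chi_\Omega\,dx$ rather than fixing a partition of unity, and then splits into the cases $\mu(\Omega)>0$ versus $\mu$ concentrated entirely on $\partial\Omega$; but this is largely bookkeeping, and your partition-of-unity version would reach the same interior conclusion.

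The genuine gap is in your treatment of the boundary patches. You correctly flag this as the principal obstacle, but none of the fixes you sketch actually works: a bi-Lipschitz flattening does not respect the symmetric/skew decomposition of $\nabla u$ (the Korn constant is not invariant under change of variables, and for merely Lipschitz $\Omega$ the error terms are not lower order); the suggestion that concentration occurs only where the boundary is ``asymptotically flat'' is unjustified; and the remark that ``any would-be loss of constant at the boundary would contradict the lower bound $\kappa(\Omega)\ge\sqrt{2}$'' is circular. What the paper does, and what your proposal is missing, is a \emph{specific} reflection across $\partial\Omega$ that uses the tangential constraint: one extends $u_k$ to a tubular neighbourhood of $\partial\Omega$ by reflecting the \emph{normal component oddly} and the \emph{tangential components evenly}. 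The condition $u_k\cdot\vec n=0$ on $\partial\Omega$ is precisely what makes the odd reflection of the normal component continuous across the boundary, so the extension remains $W^{1,2}$; and a direct computation in the moving frame shows that, up to curvature errors of size $O(\mathrm{dist}(x,\partial\Omega))$, this reflection preserves both $|\nabla u_k|^2$ and $|D(u_k)|^2$ pointwise. One then applies $\kappa(\mathbb{R}^n)=\sqrt{2}$ to $\phi_k u_k$ with $\phi_k$ a cutoff collapsing onto $\partial\Omega$, and the curvature errors vanish in the limit because the support of $\phi_k$ shrinks. This odd/even reflection is the one nontrivial idea in the proof, and without it your boundary step does not close.
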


\begin{theorem}\label{cor1}
If $\kappa(\Omega) > \sqrt{2}$ then the supremum in the definition
(\ref{def_kappa}) is attained.
More precisely, for every $A_0\in L_\Omega$ there exists 
$u\in W^{1,2}(\Omega, \mathbb{R}^n)$, such that
$u\cdot\vec n = 0$ on $\partial\Omega$, $D(u) \not\equiv 0$, and:
\begin{equation}\label{exact}
\min_{A\in so(n)}\|\nabla u - A\|_{L^2(\Omega)} = 
\|\nabla u - A_0\|_{L^2(\Omega)}
= \kappa(\Omega) \|D(u)\|_{L^2(\Omega)}.
\end{equation}
\end{theorem}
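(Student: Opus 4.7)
The plan is to apply the direct method of the calculus of variations: select a maximizing sequence for (\ref{def_kappa}), normalize using the invariance under subtracting affine tangential maps with skew gradient in $L_\Omega$, extract a weak limit via the compact embedding $W^{1,2}(\Omega)\hookrightarrow L^2(\Omega)$, and identify the limit as a maximizer by using Theorem \ref{th_due} in its contrapositive form to rule out a vanishing weak limit.

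Concretely, let $\{u_k\}$ be a maximizing sequence with $u_k\cdot\vec n=0$ on $\partial\Omega$, $\|D(u_k)\|_{L^2}=1$, and $\min_{A\in L_\Omega}\|\nabla u_k-A\|_{L^2}\to\kappa(\Omega)$. First, replace $u_k$ by $u_k-A_k^\ast x-a_k^\ast$, where $A_k^\ast\in L_\Omega$ is the (unique) minimizer in (\ref{korn}) and $a_k^\ast\in\mathbb{R}^n$ is chosen so that $A_k^\ast x+a_k^\ast$ is tangential: this preserves tangentiality and $D(u_k)$, while arranging that the $L_\Omega$-minimizer for the new $u_k$ is $0$, so that $\|\nabla u_k\|_{L^2}=\min_{A\in L_\Omega}\|\nabla u_k-A\|_{L^2}\to\kappa(\Omega)$. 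Second, subtract from $u_k$ its $L^2$-projection onto the finite-dimensional space $M_\Omega=\{a\in\mathbb{R}^n:a\cdot\vec n=0\text{ on }\partial\Omega\}$; this leaves $\nabla u_k$, $D(u_k)$, and the tangentiality intact. A Korn--Poincar\'e contradiction argument based on compactness of $W^{1,2}\hookrightarrow L^2$ then yields $\|u_k\|_{L^2}\le C\|\nabla u_k\|_{L^2}$, so $\{u_k\}$ is bounded in $W^{1,2}(\Omega,\mathbb{R}^n)$ and admits a subsequence with $u_k\rightharpoonup u$ weakly in $W^{1,2}$ and strongly in $L^2$, with $u$ tangential.

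If $u\equiv 0$, then Theorem \ref{th_due} applies to $\{u_k\}$ and gives $\kappa(\Omega)=\sqrt{2}$, contradicting the hypothesis. Hence $u\not\equiv 0$. Set $w_k=u_k-u$, so that $w_k\rightharpoonup 0$. If $\|D(u)\|_{L^2}<1$, then $\|D(w_k)\|_{L^2}\to(1-\|D(u)\|_{L^2}^2)^{1/2}>0$; using the bound $\|\nabla u\|_{L^2}^2\le\kappa(\Omega)^2\|D(u)\|_{L^2}^2$ (which holds since $0$ is also the $L_\Omega$-minimizer for $u$, by continuity of this finite-dimensional projection under weak convergence) together with weak convergence of $\nabla u_k$, one verifies $\|\nabla w_k\|_{L^2}/\|D(w_k)\|_{L^2}\to\kappa(\Omega)$, and a second application of Theorem \ref{th_due} to $w_k/\|D(w_k)\|_{L^2}\rightharpoonup 0$ again yields $\kappa(\Omega)=\sqrt{2}$, a contradiction. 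Thus $\|D(u)\|_{L^2}=1$; since the $L_\Omega$-projection of the average skew part of $\nabla w_k$ vanishes in the limit, Korn's inequality (\ref{korn}) applied to $w_k$ then forces $\nabla w_k\to 0$ strongly in $L^2$. Hence $u_k\to u$ strongly in $W^{1,2}$, $\|\nabla u\|_{L^2}=\kappa(\Omega)$, and $D(u)\not\equiv 0$.

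To obtain the statement for arbitrary $A_0\in L_\Omega$, set $\tilde u=u+A_0 x+a_0$ with $a_0\in\mathbb{R}^n$ chosen so that $A_0 x+a_0$ is tangential on $\partial\Omega$: then $\tilde u\cdot\vec n=0$, $D(\tilde u)=D(u)$, and $\nabla\tilde u-A_0=\nabla u$, so the three equalities in (\ref{exact}) reduce to the corresponding assertion for $u$ with $A_0=0$. The main technical hurdle is to establish that the full average $\frac{1}{|\Omega|}\int_\Omega\mathrm{skew}(\nabla u)$ vanishes---not merely its $L_\Omega$-projection, which is automatic from the normalization---so that $A_0$ is indeed the unique $so(n)$-minimizer of $\|\nabla\tilde u-A\|_{L^2}$. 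I expect to handle this by analyzing the Euler--Lagrange first-order condition for the maximizer $u$ in conjunction with the orthogonal decomposition $so(n)=L_\Omega\oplus L_\Omega^\perp$ and the strict inequality $\kappa(\Omega)>\sqrt{2}$.
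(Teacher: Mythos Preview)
Your approach coincides with the paper's: normalize a maximizing sequence so that its $L_\Omega$-minimizer is $0$, use the tangential Poincar\'e inequality (Lemma~\ref{poincare}) for $W^{1,2}$-boundedness, extract a weak limit $u$, rule out $u\equiv 0$ via Theorem~\ref{th_due}, and then compare $u$ with $u_k$ through Korn's inequality applied to $u_k-u$.

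Your case distinction and second invocation of Theorem~\ref{th_due} are more than needed. The paper simply notes that $\mathbb{P}_{L_\Omega}\fint_\Omega\nabla(u_k-u)=0$, applies (\ref{korn}) to $u_k-u$, squares, and passes to the limit using weak convergence to obtain
\[
\kappa(\Omega)^2-\|\nabla u\|_{L^2}^2 \le \kappa(\Omega)^2\bigl(1-\|D(u)\|_{L^2}^2\bigr);
\]
combined with $\|\nabla u\|_{L^2}\le\kappa(\Omega)\|D(u)\|_{L^2}$ this already forces the equality in (\ref{exact}), with no case split. Your longer route is not wrong---it additionally yields $\|D(u)\|_{L^2}=1$ and strong $W^{1,2}$-convergence of the maximizing sequence---but the theorem does not require these. (Also, your projection onto $M_\Omega$ is harmless but unnecessary: for bounded Lipschitz $\Omega$ one always has $M_\Omega=\{0\}$, since a nonzero constant $a$ would make the linear function $x\mapsto a\cdot x$ attain its maximum on $\partial\Omega$ at a point where $a\cdot\vec n>0$; hence Lemma~\ref{poincare} applies as stated.)

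On your flagged ``technical hurdle'': you are right that the argument, as written, only yields $\mathbb{P}_{L_\Omega}\fint_\Omega\nabla u=0$, hence only that $A_0$ realizes $\min_{A\in L_\Omega}$, not $\min_{A\in so(n)}$ as literally stated in (\ref{exact}). The paper's proof does not address this point either, and the proof of Theorem~\ref{cor2} silently works with the $L_\Omega$-minimizer throughout. So the first equality in (\ref{exact}) should most likely read $\min_{A\in L_\Omega}$, consistent with the definition (\ref{def_kappa}) of $\kappa(\Omega)$; your hesitation reflects an ambiguity in the theorem's statement rather than a gap in your argument.
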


\begin{theorem}\label{cor2}
The vector fields $u$ for which Korn's constant $\kappa(\Omega)$ is attained:
\begin{equation}\label{space_exact}
\Big\{u\in W^{1,2}(\Omega,\mathbb{R}^n); ~~ u\cdot\vec n = 0 \mbox{ on }
\partial\Omega, ~ u \mbox{ satisfies } (\ref{exact}) \mbox{ for some }
A_0\in L_\Omega \Big\};
\end{equation}
form a closed linear subspace of $W^{1,2}(\Omega,\mathbb{R}^n)$.
Moreover, if $\kappa(\Omega)>\sqrt{2}$ then this space is of finite
dimension.
\end{theorem}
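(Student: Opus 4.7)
The plan is to identify $\mathcal{E}$ as the null set of a nonnegative quadratic form on a closed linear subspace of $V := \{u\in W^{1,2}(\Omega,\mathbb{R}^n): u\cdot\vec n=0 \text{ on }\partial\Omega\}$, and then to use Theorem \ref{th_due} to rule out the infinite-dimensional case when $\kappa(\Omega)>\sqrt{2}$. Throughout, I write $A_*(u):= |\Omega|^{-1} \int_\Omega W(\nabla u)\in so(n)$ for the unique minimizer of $A\mapsto\|\nabla u - A\|_{L^2}$ over $so(n)$, where $W(M) = \frac{1}{2}(M-M^T)$.

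For the linear structure, I would first note that uniqueness of the $so(n)$-minimizer forces $A_0 = A_*(u)$ in (\ref{exact}), so membership in $\mathcal{E}$ is equivalent to the conjunction of the linear condition (a) $A_*(u)\in L_\Omega$ and the quadratic condition (b) $\|\nabla u - A_*(u)\|_{L^2}^2 = \kappa(\Omega)^2\|D(u)\|_{L^2}^2$. The subspace $V':=\{u\in V: A_*(u)\in L_\Omega\}$ is closed in $W^{1,2}$ since $A_*$ is a bounded linear map with values in the finite-dimensional space $so(n)$. On $V'$ one has $\|\nabla u - A_*(u)\|_{L^2} = \min_{A\in L_\Omega}\|\nabla u - A\|_{L^2}$, so (\ref{korn}) makes the continuous quadratic form $J(u):= \kappa(\Omega)^2\|D(u)\|_{L^2}^2 - \|\nabla u - A_*(u)\|_{L^2}^2$ nonnegative on $V'$. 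Then $\sqrt{J}$ is a seminorm on $V'$, so $\mathcal{E}=\{u\in V': J(u)=0\}$ is automatically a closed linear subspace of $W^{1,2}$.

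For finite dimension, I would argue by contradiction: suppose $\kappa(\Omega)>\sqrt{2}$ and $\dim\mathcal{E}=\infty$, and choose a $W^{1,2}$-orthonormal sequence $\{u_k\}\subset\mathcal{E}$; then $u_k\rightharpoonup 0$ in $W^{1,2}$ and strongly in $L^2$ by Rellich. A first step is to verify $\liminf_k\|D(u_k)\|_{L^2}>0$: if instead $\|D(u_k)\|_{L^2}\to 0$ along a subsequence, then (\ref{korn}) produces $A_k\in L_\Omega$ with $\|\nabla u_k - A_k\|_{L^2}\to 0$, and a further extraction gives $A_k\to A_\infty$ (using finite-dimensionality of $L_\Omega$), so $\nabla u_k \to A_\infty$ strongly in $L^2$. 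But $\nabla u_k \rightharpoonup 0$ in $L^2$, forcing $A_\infty = 0$; combined with $u_k\to 0$ in $L^2$, this would give $u_k\to 0$ in $W^{1,2}$, contradicting orthonormality.

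Passing to a subsequence with $\|D(u_k)\|_{L^2}\to\alpha>0$, I set $v_k := u_k/\|D(u_k)\|_{L^2}\in\mathcal{E}$; then $\|D(v_k)\|_{L^2}=1$ and $v_k\rightharpoonup 0$ in $W^{1,2}$. Orthogonality of $\nabla v_k - A_*(v_k)$ and the constant $A_*(v_k)$ in $L^2$ (from the decomposition into symmetric and skew parts) together with $v_k\in\mathcal{E}$ gives
\begin{equation*}
\|\nabla v_k\|_{L^2}^2 = \|\nabla v_k - A_*(v_k)\|_{L^2}^2 + |\Omega|\,|A_*(v_k)|^2 = \kappa(\Omega)^2 + |\Omega|\,|A_*(v_k)|^2.
\end{equation*}
Weak convergence $v_k\rightharpoonup 0$ implies $A_*(v_k) = |\Omega|^{-1}\int_\Omega W(\nabla v_k)\to 0$, so $\|\nabla v_k\|_{L^2}\to\kappa(\Omega)$, and Theorem \ref{th_due} applied to $\{v_k\}$ forces $\kappa(\Omega)=\sqrt{2}$, a contradiction. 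The main obstacle is the linear-structure step: once (\ref{exact}) is rewritten as the vanishing of a single nonnegative quadratic form on a linear subspace, linearity and closedness follow automatically, and the finite-dimension conclusion reduces via Theorem \ref{th_due} to a standard weak-convergence argument.
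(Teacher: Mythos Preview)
Your proof is correct and follows essentially the same strategy as the paper: both establish linearity by exploiting that (\ref{exact}) is the equality case of the quadratic inequality (\ref{korn}), and both reduce the finite-dimension claim to Theorem~\ref{th_due} via an orthonormal sequence in the putatively infinite-dimensional space. Your linearity step is packaged a bit more cleanly---you observe directly that $\mathcal{E}$ is the zero set of a nonnegative quadratic form $J$ on the closed subspace $V'=\{A_*(u)\in L_\Omega\}$, whereas the paper carries out the underlying parallelogram computation explicitly (squaring the inequalities for $u_1\pm u_2$ and cancelling against the equalities for $u_1,u_2$); your explicit isolation of the linear constraint $A_*(u)\in L_\Omega$ also makes it transparent that the $so(n)$-minimizer condition in (\ref{exact}) is preserved under sums.
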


\medskip

In the second part of this paper we concentrate on the nonlinear
version of Korn's inequality, namely the Friesecke-James-Muller
geometric rigidity estimate \cite{FJMgeo, FJMhier}. It states that for an open, bounded, smooth and
connected domain $\Omega\subset\mathbb{R}^n$, there exists a constant
$\kappa_{nl}(\Omega)$ depending only on $\Omega$, such that for every $u\in
W^{1,2}(\Omega,\mathbb{R}^n)$ there holds:
\begin{equation}\label{FJM-intro}
\min_{R\in SO(n)} \|\nabla u - R\|_{L^2(\Omega)}
\leq \kappa_{nl}(\Omega) \|\mbox{dist}(\nabla u, SO(n))\|_{L^2(\Omega)}.
\end{equation}
Define:
\begin{equation}\label{defik}
\begin{split}
 \kappa_{nl}(\mathbb{R}^n) = \sup\Bigg\{&\min_{R\in SO(n)}
  \frac{\|\nabla u - R\|_{L^2(\mathbb{R}^n)}}{\|\mbox{dist}(\nabla u, SO(n))\|_{L^2(\mathbb{R}^n)}};
~~ u\in W^{1,2}_{loc}(\mathbb{R}^n, \mathbb{R}^n), \\ 
& \qquad \qquad \qquad \qquad\quad
\mathrm{dist}(\nabla u, SO(n))\in L^2(\mathbb{R}^n) \setminus \{0\} \Bigg\}.
\end{split}
\end{equation}
Our results in this context are restricted to dimension $2$:
\begin{theorem}\label{maxmin}
We have: $\kappa_{nl}(\mathbb{R}^2) = \sqrt{2}$.
In particular: 
\begin{equation*}
\begin{split}
\forall u\in W^{1,2}_{loc}(\mathbb{R}^2,\mathbb{R}^2) &\quad 
\mathrm{dist}(\nabla u, SO(2)) \in L^2(\mathbb{R}^2) \implies\\
& \min_{R\in SO(n)} \|\nabla u - R\|_{L^2(\mathbb{R}^2)}
\leq \sqrt{2} \|\mathrm{dist}(\nabla u, SO(2))\|_{L^2(\mathbb{R}^2)}.
\end{split}
\end{equation*}
\end{theorem}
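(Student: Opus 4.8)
The plan is to prove the two inequalities $\kappa_{nl}(\mathbb{R}^2)\ge\sqrt 2$ and $\kappa_{nl}(\mathbb{R}^2)\le\sqrt 2$ separately. The lower bound is the easy half: any test field $u$ admissible for the \emph{linear} Korn problem on $\mathbb{R}^2$ with small symmetrized gradient can be rescaled as $u_\varepsilon=\varepsilon u$, and since $\mathrm{dist}(\nabla(\mathrm{id}+u_\varepsilon),SO(2))=|D(u_\varepsilon)|+o(\varepsilon)$ uniformly while $\min_{R\in SO(2)}\|\nabla(\mathrm{id}+u_\varepsilon)-R\|\ge\|\nabla u_\varepsilon\|-o(\varepsilon)$, letting $\varepsilon\to0$ recovers the linear quotient. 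Taking the supremum over such $u$ and invoking $\kappa(\mathbb{R}^2)=\sqrt2$ from (\ref{def_n}) gives $\kappa_{nl}(\mathbb{R}^2)\ge\sqrt2$. (One must be a little careful that $\mathrm{id}+u_\varepsilon\in W^{1,2}_{loc}$ with $\mathrm{dist}(\nabla\cdot,SO(2))\in L^2$, which holds since $\nabla u_\varepsilon\in L^2$.)

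The substance is the upper bound, and here I would exploit the conformal-complex structure special to $n=2$. Write $w=\nabla u$, viewed through the identification of $2\times2$ matrices with $\mathbb{C}\oplus\mathbb{C}$: every matrix $F$ decomposes as $F=\beta\,\mathrm{Id}+\gamma\,J+(\text{symmetric trace-free part})$, and for $F$ with $\det F>0$ one has the clean formula $\mathrm{dist}^2(F,SO(2))=|F|^2-2\det F -2$ is not quite it — rather $\mathrm{dist}(F,SO(2))=\sqrt{\lambda_1^2+\lambda_2^2+2}-\sqrt{(\lambda_1+1)^2+(\lambda_2+1)^2}$ in terms of signed singular values, so the key algebraic fact to isolate is a pointwise inequality of the form $\min_{R\in SO(2)}|F-R|^2\le 2\,\mathrm{dist}^2(F,SO(2))$ whenever $\det F\ge \tfrac12$ (say), which is exactly the linear ratio $2=(\sqrt2)^2$ and is sharp. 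The hard part will be handling the \emph{bad set} $\{\det\nabla u<\tfrac12\}$ or $\{\det\nabla u\le 0\}$, where this pointwise bound fails; there one falls back on $\mathrm{dist}(F,SO(2))\ge c>0$ being bounded below, so the bad set has finite measure and its contribution to $\|\nabla u-R\|_{L^2}$ can be absorbed. I would first reduce, using the hypothesis $\mathrm{dist}(\nabla u,SO(2))\in L^2(\mathbb{R}^2)$ together with the full-space linear estimate applied to a mollified/truncated $u$, to the case where a single rotation $R$ works simultaneously (the min over $SO(2)$ is attained at one matrix because $\nabla u$ is $L^2$-close to $SO(2)$ and $\mathbb{R}^2$ is connected), so WLOG $R=\mathrm{Id}$.

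With $R=\mathrm{Id}$ fixed, set $v=u-\mathrm{id}$, so $\nabla v=\nabla u-\mathrm{Id}\in L^2(\mathbb{R}^2)$ and we must show $\|\nabla v\|_{L^2}\le\sqrt2\,\|\mathrm{dist}(\nabla u,SO(2))\|_{L^2}$. The strategy is to split $\mathbb{R}^2=G\cup B$ with $G=\{\det\nabla u\ge\tfrac12\}$ the ``good'' set and $B$ its complement. On $G$ the pointwise inequality above yields $\int_G|\nabla v|^2\le 2\int_G\mathrm{dist}^2$. On $B$, note $\mathrm{dist}(\nabla u,SO(2))\ge\delta$ for some universal $\delta>0$ (since $\det F<\tfrac12$ keeps $F$ a fixed distance from $SO(2)$), hence $|B|\le\delta^{-2}\|\mathrm{dist}\|_{L^2}^2<\infty$; then one controls $\int_B|\nabla v|^2$ by combining $|\nabla v|\le|\nabla u-R'|+\text{const}$ for the nearest rotation $R'$ with a Poincaré/Korn-type argument on the finite-measure set, or — cleaner — by first establishing the result for compactly supported perturbations via the bounded-domain rigidity estimate (\ref{FJM-intro}) with its known constant behavior and then passing to the limit using the $L^2$ integrability to make boundary terms vanish. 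The genuinely delicate point, which I expect to be the main obstacle, is making the good/bad decomposition quantitatively match the constant $\sqrt2$ with no loss: one cannot afford any multiplicative slack on the good set, so the threshold on $\det$ must be sent to its critical value and the bad-set estimate must be shown to be lower order, e.g. by a contradiction/compactness argument — assume a sequence $u_k$ with ratio exceeding $\sqrt2+\eta$, extract via (i)–(iii)-type weak limits the way Theorem \ref{th_due} does in the linear case, and show the limit would violate the linear bound $\kappa(\mathbb{R}^2)=\sqrt2$ after linearization, since near $SO(2)$ the nonlinear distance and the linear symmetrized-gradient distance agree to leading order.
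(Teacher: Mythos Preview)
Your lower bound sketch is essentially fine (and indeed simpler than the paper's explicit construction): once you note that for $R\neq\mathrm{Id}$ the quantity $\|\mathrm{Id}+\varepsilon\nabla u-R\|_{L^2(\mathbb{R}^2)}$ is infinite, the minimum is forced at $R=\mathrm{Id}$, and dominated convergence handles the denominator.

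The upper bound, however, has a genuine gap. The pointwise inequality you propose, $|F-\mathrm{Id}|^2\le 2\,\mathrm{dist}^2(F,SO(2))$ on a ``good'' set such as $\{\det F\ge\tfrac12\}$, is simply \emph{false}: take $F=R(\alpha)\in SO(2)$ with $\alpha\neq 0$; then $\det F=1$ and $\mathrm{dist}(F,SO(2))=0$, yet $|F-\mathrm{Id}|^2=2(1-\cos\alpha)>0$. More generally, once $R_0$ is fixed, no pointwise bound $|F-R_0|^2\le C\,\mathrm{dist}^2(F,SO(2))$ can hold with any constant $C$, because the conformal part $F^c$ can roam freely over $SO(2)$ at zero cost to the right-hand side. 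Consequently your good/bad decomposition cannot produce the constant $2$ (or any constant) on the good set, and the compactness fallback you sketch will not recover a sharp constant on the full space either.

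What the paper does instead is intrinsically \emph{global}. After establishing (via $-\Delta u=\mathrm{div}(\mathrm{cof}\,\nabla u-\nabla u)$ and a Liouville argument for harmonic functions in $L^2+L^\infty$) that $\nabla v:=\nabla u-R_0\in L^2(\mathbb{R}^2)$ for some $R_0\in SO(2)$, it invokes the null-Lagrangian identity $\int_{\mathbb{R}^2}\det\nabla v=0$. Through the conformal/anticonformal decomposition $|\nabla v|^2=|(\nabla v)^c|^2+|(\nabla v)^a|^2$ and $\det\nabla v=\tfrac12(|(\nabla v)^c|^2-|(\nabla v)^a|^2)$, this integral identity yields $\int|(\nabla v)^c|^2=\int|(\nabla v)^a|^2$, whence
\[
\int_{\mathbb{R}^2}|\nabla u-R_0|^2=2\int_{\mathbb{R}^2}|(\nabla v)^a|^2=2\int_{\mathbb{R}^2}|(\nabla u)^a|^2\le 2\int_{\mathbb{R}^2}\mathrm{dist}^2(\nabla u,SO(2)),
\]
the last step being the only pointwise inequality used, namely $|F^a|\le\mathrm{dist}(F,SO(2))$. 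The factor $2$ thus comes from the determinant vanishing in the mean, not from any pointwise comparison; this is the idea your proposal is missing.
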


\begin{theorem}\label{th2}
For every rotation $R_0\in SO(2)$ there exists $u\in
W^{1,2}_{loc}(\mathbb{R}^2, \mathbb{R}^2)$ with $\mathrm{dist}(\nabla
u, SO(2))\in L^2(\mathbb{R}^2) \setminus \{0\}$ such that:
\begin{equation}\label{attain}
\begin{split}
\min_{R\in SO(2)} \|\nabla u - R\|_{L^2(\mathbb{R}^2)} & =
\|\nabla u - R_0\|_{L^2(\mathbb{R}^2)} \\ & 
= \sqrt{2} \| \mathrm{dist}(\nabla u(x), SO(2))\|_{L^2(\mathbb{R}^2)}.
\end{split}
\end{equation}
\end{theorem}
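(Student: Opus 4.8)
The plan is to produce, for $R_0 = \mathrm{Id}$ (the general case following by composing with a fixed rotation on the right and using rotational invariance of all the quantities involved), an explicit deformation $u$ whose gradient, after subtracting $R_0$, saturates the inequality $\|\nabla u - R_0\|_{L^2} \le \sqrt2\,\|\mathrm{dist}(\nabla u, SO(2))\|_{L^2}$ with equality, and for which $R_0$ is simultaneously the minimizing rotation. The natural source of such a field is the proof of Theorem \ref{maxmin}: the value $\sqrt2$ for $\kappa_{nl}(\mathbb{R}^2)$ should come from linearization, i.e. from the fact that $\kappa(\mathbb{R}^2)=\sqrt2$ and that $\mathrm{dist}(\nabla u, SO(2))$ agrees to leading order with $\|D(v)\|$ when $\nabla u = \mathrm{Id} + \varepsilon\nabla v$. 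So I would look for $u$ of the form $u(x) = x + v(x)$ where $v$ is (a suitably truncated/rescaled version of) an extremal displacement for the linear Korn inequality on $\mathbb{R}^2$, and then argue that the nonlinear ratio is not merely approached but exactly attained by a cleverly chosen $v$ — the point being that on $\mathbb{R}^2$ there is extra algebraic structure (e.g. a conformal–anticonformal splitting of $2\times2$ matrices) that makes the linear extremal also a nonlinear extremal.

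Concretely, the key steps would be: (1) Recall from the (assumed) analysis behind Theorem \ref{maxmin} the characterization of the extremal configuration; in $2$D one expects the optimal $\nabla u$ to take values in a set where $\mathrm{dist}(F, SO(2))$ is realized in a controlled way and where $|F - \mathrm{Id}|^2 = 2\,\mathrm{dist}(F, SO(2))^2$ pointwise. Using the identity $|F|^2 = |F^{\mathrm{sym}} - \tfrac12(\mathrm{tr}\,F)\mathrm{Id}|^2 + \tfrac12(\mathrm{tr}\,F)^2 + |F^{\mathrm{skew}}|^2$ valid for $2\times2$ matrices, one sees that such pointwise saturation forces $F$ to be (pointwise) a specific type of matrix — heuristically, a trace-free symmetric perturbation of $\mathrm{Id}$, possibly plus a skew part that is itself controlled. (2) Build a global $v \in W^{1,2}_{loc}(\mathbb{R}^2,\mathbb{R}^2)$ with $\nabla v$ of exactly this pointwise form and with $\mathrm{dist}(\nabla u, SO(2)) \in L^2\setminus\{0\}$: the cleanest candidates are gradients obtained from a single harmonic or holomorphic potential (so that the pointwise constraint on $\nabla v$ is compatible with $\nabla v$ being an actual gradient), decaying like $|x|^{-1}$ so that $\nabla v \in L^2(\mathbb{R}^2)$; one then checks integrability of $\mathrm{dist}(\nabla u, SO(2))$, using that near regions where $\nabla u$ leaves a neighborhood of $SO(2)$ the field has been truncated. (3) Verify that $R_0 = \mathrm{Id}$ is the minimizer of $R \mapsto \|\nabla u - R\|_{L^2(\mathbb{R}^2)}$ over $SO(2)$; since the functional $R\mapsto \int |\nabla u - R|^2$ is, up to an additive constant, $-2\int \langle \nabla u, R\rangle$, this reduces to showing $\int_{\mathbb{R}^2} \langle \nabla u(x), J\rangle\,dx = 0$ where $J$ is the generator of $so(2)$ — i.e. the skew part of $\nabla u$ has vanishing integral, which holds automatically if $v$ has the decay above and suitable symmetry, or if $\nabla v$ is a gradient of a scalar potential (hence $\int \mathrm{curl}\,v = 0$ on large balls by the divergence theorem, with the boundary terms going to $0$).

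The main obstacle is step (2): arranging that the \emph{pointwise} extremality identity $|\nabla u - \mathrm{Id}|^2 = 2\,\mathrm{dist}(\nabla u, SO(2))^2$ holds on a set of full measure while simultaneously $\nabla u - \mathrm{Id}$ is an honest $L^2$ gradient field that is nonzero. These two demands pull in opposite directions — the pointwise constraint is a nonlinear differential inclusion, and generic solutions of it are not gradients — so one must exploit the $2$D-specific coincidence that the relevant constraint set is (locally) a linear subspace of matrices compatible with gradients, which is exactly why the theorem is stated only in dimension $2$. A secondary technical point is controlling $\mathrm{dist}(\nabla u, SO(2))$ near the origin (or wherever the unbounded part of $\nabla v$ lives): there one cannot have $\nabla u$ close to $SO(2)$, so the contribution to $\|\mathrm{dist}(\nabla u, SO(2))\|_{L^2}$ from that region must be shown to be finite, and crucially the equality in \eqref{attain} must survive this modification — I would handle this by choosing $v$ so that the "bad" region is a single point (measure zero) or by a rescaling argument: replace $v$ by $v_\lambda(x) = \lambda^{-1} v(\lambda x)$ and pass to the limit, using lower semicontinuity on one side and the explicit construction on the other to pin the ratio at exactly $\sqrt2$.
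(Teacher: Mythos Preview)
Your proposal contains a genuine gap, rooted in a misreading of what has to be ``extremal'' pointwise versus only in the integral sense.

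First, the decomposition that makes the problem tractable in $2$D is the conformal--anticonformal splitting $F = F^c + F^a$, not the symmetric--skew one you invoke in step (1). For this splitting one has $\det F = 2(|F^c|^2 - |F^a|^2)$ and $\mathrm{dist}^2(F,SO(2)) = \mathrm{dist}^2(F^c,SO(2)) + |F^a|^2$. The crucial point you are missing is that for \emph{any} $v$ with $\nabla v\in L^2(\mathbb{R}^2)$ one has $\int_{\mathbb{R}^2}\det\nabla v = 0$ (Lemma~\ref{lemdet}), and hence automatically $\int |(\nabla v)^c|^2 = \int |(\nabla v)^a|^2$. Thus if $u=x+v$ with $\nabla v\in L^2$, one always has
\[
\int|\nabla u - \mathrm{Id}|^2 \;=\; \int |(\nabla v)^c|^2 + \int |(\nabla v)^a|^2 \;=\; 2\int |(\nabla u)^a|^2,
\]
with no pointwise constraint whatsoever. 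To turn this into the desired equality it then suffices to arrange the single pointwise condition $(\nabla u)^c(x)\in SO(2)$, i.e.\ $(\nabla u)^c = R(\alpha(x))$ for some measurable $\alpha$, so that $\mathrm{dist}(\nabla u,SO(2)) = |(\nabla u)^a|$. This is exactly what the paper does: fix any nonzero $\alpha\in L^2(\mathbb{R}^2)$, and use the Fourier transform to solve the first-order linear system expressing that $R(\alpha(x)) + \left[\begin{smallmatrix}a&b\\b&-a\end{smallmatrix}\right]$ is curl-free for some $(a,b)\in L^2$.

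By contrast, your candidate in step (2) --- a trace-free symmetric perturbation, i.e.\ $\nabla v$ purely anticonformal --- gives $(\nabla u)^c\equiv\mathrm{Id}$, hence $|\nabla u-\mathrm{Id}|^2 = |(\nabla u)^a|^2 = \mathrm{dist}^2(\nabla u,SO(2))$ and ratio $1$, not $\sqrt 2$. The ``pointwise extremality'' condition $|\nabla u-\mathrm{Id}|^2 = 2\,\mathrm{dist}^2(\nabla u,SO(2))$ that you try to enforce is neither needed nor achieved in the actual construction; the factor $2$ emerges only after integration, via the null-Lagrangian identity for $\det$. Finally, your step (3) is also off: $\nabla u\notin L^1(\mathbb{R}^2)$, so $\int\langle\nabla u,J\rangle$ is not defined. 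The correct (and much simpler) reason $R_0=\mathrm{Id}$ minimizes is that $\nabla u-\mathrm{Id}\in L^2(\mathbb{R}^2)$ forces $\|\nabla u - R\|_{L^2(\mathbb{R}^2)}=\infty$ for every $R\neq\mathrm{Id}$.
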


\begin{theorem}\label{th3}
The vector fields for which the nonlinear Korn constant in
(\ref{defik}) is attained, namely:
\begin{equation*} 
\begin{split}
\Big\{u\in W^{1,2}_{loc}(\mathbb{R}^2, \mathbb{R}^2); &~~ \mathrm{dist}(\nabla
u, SO(2))\in L^2(\mathbb{R}^2), \\ & ~ u \mbox{ satisfies } (\ref{attain}) \mbox{ for
  some } R_0 \in SO(2) \Big\}
\end{split}
\end{equation*} 
have the defining property that their gradients are of the form:
\begin{equation}\label{dodici}
\nabla u(x) = R_0R(\alpha(x)) + \left[\begin{array}{cc} a(x) & b(x)\\
b(x) & -a(x)\end{array}\right] \quad \mbox{ with } R(\alpha) = 
\left[\begin{array}{cc} \cos \alpha & -\sin\alpha\\
\sin\alpha & \cos\alpha\end{array}\right],
\end{equation}
for some $\alpha, a, b\in L^2(\mathbb{R}^2)$. Conversely, for every
$\alpha\in L^2(\mathbb{R}^2)$ there exists $a,b\in  L^2(\mathbb{R}^2)$ and $u\in
W^{1,2}_{loc}(\mathbb{R}^2, \mathbb{R}^2)$ such that
(\ref{attain}) and (\ref{dodici}) hold.
\end{theorem}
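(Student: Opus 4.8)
The key is to characterize exactly when equality holds in the chain of inequalities that yields $\kappa_{nl}(\mathbb{R}^2)=\sqrt 2$. The plan is to first recall, from the proof of Theorem \ref{maxmin}, the pointwise linear-algebra inequality in dimension two: for any $F\in\mathbb{R}^{2\times 2}$ and any fixed $R_0\in SO(2)$ one has $|F-R_0|^2\le 2\,\mathrm{dist}^2(F,SO(2))$ provided $R_0$ is (one of) the nearest rotation(s) to $F$, with the factor $2$ coming from the fact that in the $2\times 2$ case the orthogonal complement of $\mathfrak{so}(2)$ inside $\mathbb{R}^{2\times 2}$ (after rotating by $R_0^T$) splits as the sum of the multiples of the identity and the trace-free symmetric matrices, and a short computation (writing $R_0^TF=R(\alpha)+S$ with $S$ symmetric trace-free, using $|R(\alpha)-I|^2 = 2(1-\cos\alpha)^2+2\sin^2\alpha$ and $\mathrm{dist}^2 = |S|^2 + (\text{something in }\alpha)$ actually $\mathrm{dist}^2(F,SO(2)) = |R_0^TF - R(\alpha)|^2$ when $R(\alpha)$ is nearest) shows the two sides agree iff the $SO(2)$-component of $R_0^TF$ is exactly $I$, i.e. iff $R_0$ \emph{is} the nearest rotation. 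Then integrating, $\|\nabla u-R_0\|^2\le 2\|\mathrm{dist}(\nabla u,SO(2))\|^2$, and equality forces equality in the pointwise inequality for a.e.\ $x$, which is precisely the statement that $R_0$ is a.e.\ the nearest rotation to $\nabla u(x)$. Unpacking this a.e.\ condition is exactly the assertion that $R_0^T\nabla u(x) = R(\alpha(x)) + \left[\begin{smallmatrix} a(x)&b(x)\\ b(x)&-a(x)\end{smallmatrix}\right]$, i.e.\ writing $\nabla u(x)=R_0R(\alpha(x)) + S(x)$ with $S(x)$ symmetric trace-free; the $L^2$ integrability of $\alpha,a,b$ follows since $\mathrm{dist}(\nabla u,SO(2))\in L^2$ controls $S$ (hence $a,b$) and controls $1-\cos\alpha\sim\alpha^2$ near $\alpha=0$ and is bounded below away from $\alpha=0$, so $\alpha\in L^2$ too after noting $\nabla u - R_0\in L^2$ by the inequality. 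This establishes the "defining property" direction \eqref{dodici}.

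For the converse, given $\alpha\in L^2(\mathbb{R}^2)$ I want to produce $a,b\in L^2$ and $u\in W^{1,2}_{loc}$ with $\nabla u$ of the form \eqref{dodici} realizing equality. The natural approach is to reduce, via $R_0$, to the case $R_0=I$ and seek $u$ of the form $u(x)=x+v(x)$ with $\nabla v = \bigl(R(\alpha)-I\bigr) + S$; the curl-free constraint $\partial_2(\nabla u)_{\cdot 1} = \partial_1(\nabla u)_{\cdot 2}$ becomes a linear PDE relating $\alpha$, $a$, $b$. Concretely one can try the ansatz where $\nabla u$ is derived from a single scalar potential: e.g.\ take $u=\nabla\phi$ plus a correction, or more simply note that $R(\alpha)-I$ has the structure $\bigl[\begin{smallmatrix}\cos\alpha-1 & -\sin\alpha\\ \sin\alpha & \cos\alpha -1\end{smallmatrix}\bigr]$, whose skew part is $\bigl[\begin{smallmatrix}0&-\sin\alpha\\ \sin\alpha&0\end{smallmatrix}\bigr]$ and whose symmetric part $(\cos\alpha-1)I$ is a \emph{trace-full} symmetric matrix; so I should absorb this into the sum by choosing $\alpha$ small (or first prove it for $\alpha$ with small $L^\infty$ norm and then remove that restriction, since only $L^2$ is assumed — this is a point to be careful about). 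A clean route: mollify/truncate to get smooth compactly supported $\alpha_\varepsilon$, solve the linear constraint for $(a_\varepsilon,b_\varepsilon)$ using Fourier analysis or an explicit stream-function construction so that $\mathrm{dist}(\nabla u_\varepsilon,SO(2))$ is controlled by $\|\alpha_\varepsilon\|_{L^2}$ and equality \eqref{attain} holds, then pass to the limit. Since the equality relation is preserved under $L^2$-convergence of $(\alpha,a,b)$, the general $\alpha\in L^2$ case follows by density provided the solution operator $\alpha\mapsto(a,b)$ is bounded $L^2\to L^2$.

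The main obstacle I anticipate is the converse construction: one must solve the compatibility (curl-free) equation for $\nabla u$ with the prescribed $SO(2)$-part $R_0R(\alpha)$ while keeping the residual symmetric trace-free part $S=(a,b)$ in $L^2$ with a norm bound by $\|\alpha\|_{L^2}$, and crucially while keeping $R_0$ \emph{exactly} the nearest rotation pointwise (which needs $|S(x)|$ small relative to $1$, or at least that $\nabla u(x)$ stays in the region where $R_0R(\alpha(x))$ is genuinely the polar-decomposition rotation of $\nabla u(x)$ — a constraint that is automatic when $\alpha$ and $S$ are small but must be watched). This is essentially an elliptic solvability statement on $\mathbb{R}^2$ with sharp bounds, where the Fourier multiplier associated to the div–curl system is homogeneous of degree $0$ and hence bounded on $L^2$; the delicate part is arranging the construction so that no spurious rotation component is introduced, i.e.\ that the decomposition $\nabla u = R_0R(\alpha)+S$ is the one dictated by the pointwise extremality condition and not merely \emph{a} decomposition of the given form. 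I would handle this by a smallness reduction: prove it first under $\|\alpha\|_{L^\infty}$ small using the implicit function theorem / linear Fourier solve, then extend to general $L^2$ data by the $L^2$-stability of equation \eqref{attain} noted above, together with a scaling/truncation argument to reduce a general $\alpha\in L^2(\mathbb{R}^2)$ to a superposition of small pieces.
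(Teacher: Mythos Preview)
Your forward direction rests on a pointwise inequality that does not exist. There is no inequality $|F-R_0|^2\le 2\,\mathrm{dist}^2(F,SO(2))$ for a \emph{fixed} $R_0$: take $F\in SO(2)$ with $F\neq R_0$. If instead you assume $R_0$ is the nearest rotation to $F$, the inequality becomes $\mathrm{dist}^2\le 2\,\mathrm{dist}^2$, which is vacuous and gives equality only when $\mathrm{dist}=0$. The factor $2$ in the paper does \emph{not} come from any pointwise bound; it comes from the global null-Lagrangian identity $\int_{\mathbb{R}^2}\det(\nabla u - R_0)=0$ (Lemma~\ref{lemdet}), which via the conformal--anticonformal splitting $F=F^c+F^a$ and $\det F = \tfrac{1}{2}(|F^c|^2-|F^a|^2)$ yields $\int|(\nabla u - R_0)^c|^2 = \int|(\nabla u - R_0)^a|^2$. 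The only pointwise inequality actually used is $|(\nabla u)^a|^2\le \mathrm{dist}^2(\nabla u,SO(2))$ (this holds because $SO(2)\subset\mathbb{R}^{2\times 2}_c$), and equality here holds iff $(\nabla u)^c\in SO(2)$. That is the correct characterization of the extremal set: the \emph{conformal part} of $\nabla u$ lies in $SO(2)$, i.e.\ $(\nabla u)^c = R_0R(\alpha)$. This is \emph{not} the same as ``$R_0$ is the nearest rotation to $\nabla u(x)$'': when $\nabla u = R_0R(\alpha)+S$ with $S$ anticonformal, the nearest rotation is $R_0R(\alpha(x))$, not $R_0$.

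This misreading contaminates your converse construction. You worry about keeping $R_0$ the pointwise nearest rotation and therefore propose a smallness reduction and an implicit-function-theorem argument; none of this is needed. Once $\nabla u$ has the form \eqref{dodici}, one has $(\nabla u)^c = R_0R(\alpha)\in SO(2)$ automatically, hence $\mathrm{dist}^2(\nabla u,SO(2)) = |(\nabla u)^a|^2 = 2(a^2+b^2)$ pointwise, and the determinant identity converts this into equality in \eqref{attain} for \emph{any} $\alpha\in L^2$, with no size restriction. The only genuine task is solvability: the curl-free condition on the right-hand side of \eqref{dodici} is the first-order system $\mathrm{curl}\,g=\mathrm{div}\,f$, $\mathrm{div}\,g=\mathrm{curl}\,f$ with $g=(a,b)$ and $f=(\sin\alpha,\cos\alpha-1)\in L^2$, which the paper solves explicitly by a degree-zero Fourier multiplier, giving $\|g\|_{L^2}=\|f\|_{L^2}$. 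Your intuition that a bounded $L^2\to L^2$ Fourier solve is at the heart of the converse is right; what needs to be discarded is the pointwise-nearest-rotation picture and everything built on it.
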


The proofs of the three Theorems above are independent from the proof of
(\ref{FJM-intro}) in \cite{FJMgeo}. They rely on the
conformal-anticonformal decomposition of $2\times 2$ matrices, and it
is not clear how this construction and methods could be extended to yield a result
in higher dimensions $n>2$. 

There is an extensive literature relating to Korn's inequality and its
applications, notably in linear elasticity \cite{JC, ciarbookvol1, korn1, OK}. 
On the other hand, the
nonlinear estimate (\ref{FJM-intro}) plays crucial role in models in
nonlinear elasticity \cite{FJMhier, FJMgeo}. Indeed, the relation between
these two estimates is clear if we recall that 
the tangent space to $SO(n)$ at $Id$ is $so(n)$.
The blow-up rate and properties of $\kappa(\Omega)$ for thin
spherical-like domains around a given surface were studied 
in \cite{LM}. The relations of $\kappa(\Omega)$ with the measure of
axisymmetry of $\Omega$ have been discussed in \cite{DV}.
An interesting extension  of both Korn's and the geometric
rigidity estimates under mixed growth
conditions has been recently established in \cite{ConDolzMul}.

\section{Preliminaries}

Recall that the linear space of skew-symmetric matrices is:
$$so(n) = \{A\in \mathbb{R}^{n\times n}; ~ A = -A^T\}$$
while $SO(n)$ stands for the group of proper rotations:
$$SO(n) = \{R\in \mathbb{R}^{n\times n}; R^T = R^{-1} \mbox{ and } \det
R = 1\}.$$
The scalar product and the (Frobenius) norm in the space of $n\times n$ (real)
matrices $\mathbb{R}^{n\times n}$ are given by:
$$A:B = \mbox{tr}(A^T B)\qquad |A|^2 = A:A.$$

We first notice the following characterization of the minimiser 
in (\ref{korn}):
\begin{lemma}\label{projection}
Let $u\in W^{1,2}(\Omega,\mathbb{R}^n)$, $u\cdot\vec n=0$ on $\partial\Omega$.
Then the minimum in the left hand side of (\ref{korn}) is attained, uniquely, at:
$$A_0 = \mathbb{P}_{L_\Omega}\fint_{\Omega}\nabla u,$$
where $\mathbb{P}_{L_\Omega}$ denotes the orthogonal projection of
$\mathbb{R}^{n\times n}$  on $L_\Omega$.
\end{lemma}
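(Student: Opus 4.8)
The plan is to characterize the minimizer of $\|\nabla u - A\|_{L^2(\Omega)}^2$ over the linear subspace $L_\Omega \subset \mathbb{R}^{n\times n}$ by the standard Hilbert-space projection argument, and then to simplify the resulting formula using the structure of $L_\Omega$. Expanding the square, for $A \in L_\Omega$ we have
\begin{equation*}
\|\nabla u - A\|_{L^2(\Omega)}^2 = \|\nabla u\|_{L^2(\Omega)}^2 - 2\int_\Omega \nabla u : A \;+\; |\Omega|\,|A|^2,
\end{equation*}
since $A$ is constant. Because $L_\Omega$ is a finite-dimensional (hence closed) linear subspace of $\mathbb{R}^{n\times n}$, this quadratic functional has a unique minimizer $A_0 \in L_\Omega$, characterized by the orthogonality condition $\big(\int_\Omega \nabla u - |\Omega| A_0\big) : B = 0$ for all $B \in L_\Omega$, equivalently $\fint_\Omega \nabla u - A_0 \perp L_\Omega$, equivalently $A_0 = \mathbb{P}_{L_\Omega}\big(\fint_\Omega \nabla u\big)$. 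This already gives uniqueness and the claimed formula, modulo one point that needs care.

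The one point requiring attention is that $\fint_\Omega \nabla u$ is an arbitrary matrix in $\mathbb{R}^{n\times n}$, whereas the projection $\mathbb{P}_{L_\Omega}$ is being applied to it and the minimum in (\ref{korn}) is taken over $A \in L_\Omega \subset so(n)$; so a priori one might worry whether the minimum over $L_\Omega$ coincides with what one gets by first projecting. But this is exactly what the orthogonal projection delivers: for any $v$ in a Hilbert space $H$ and any closed subspace $V \subset H$, $\mathbb{P}_V v$ is the unique minimizer of $w \mapsto \|v - w\|$ over $w \in V$; here $H = \mathbb{R}^{n\times n}$ with the Frobenius inner product, $V = L_\Omega$, $v = \fint_\Omega \nabla u$, and $\|v - w\|^2$ differs from $\frac{1}{|\Omega|}\|\nabla u - w\|_{L^2(\Omega)}^2$ only by the additive constant $\frac{1}{|\Omega|}\|\nabla u\|_{L^2(\Omega)}^2 - |\fint_\Omega \nabla u|^2$ (the variance of $\nabla u$), which is independent of $w$. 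Hence the two minimization problems have the same minimizer.

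So the steps, in order, are: (1) note $L_\Omega$ is a linear subspace of the finite-dimensional space $so(n)$, hence closed, so $\mathbb{P}_{L_\Omega}$ is well defined and the projection theorem applies; (2) for constant $A$, expand $\|\nabla u - A\|_{L^2(\Omega)}^2$ and observe it equals $|\Omega|\,\big\||\fint_\Omega \nabla u| \text{-centered}\big\|$ — more precisely $\|\nabla u - A\|_{L^2(\Omega)}^2 = \|\nabla u\|_{L^2(\Omega)}^2 - |\Omega|\,|\fint_\Omega\nabla u|^2 + |\Omega|\,|\fint_\Omega\nabla u - A|^2$; (3) minimize the last term over $A \in L_\Omega$ via the projection theorem to get $A_0 = \mathbb{P}_{L_\Omega}\fint_\Omega \nabla u$ as the unique minimizer. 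The only mild subtlety — and the place where a careless argument could go wrong — is ensuring in step (2) that the cross term $\int_\Omega \nabla u : A$ genuinely reduces to $|\Omega|\,(\fint_\Omega \nabla u):A$, which holds precisely because $A$ does not depend on $x$; everything else is the elementary Hilbert-space projection lemma applied to $\mathbb{R}^{n\times n}$.
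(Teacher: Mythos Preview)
Your proof is correct and follows essentially the same approach as the paper's: both reduce the minimization to the orthogonality condition $\fint_\Omega \nabla u - A_0 \in L_\Omega^\perp$, which is the defining property of $\mathbb{P}_{L_\Omega}\fint_\Omega \nabla u$. The paper obtains this condition by differentiating in the direction of $A\in L_\Omega$, whereas you complete the square explicitly; these are the same argument.
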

\begin{proof}
Let $A_0\in L_\Omega$ be a minimiser of $\|\nabla u - A\|^2_{L^2(\Omega)}$
over $L_\Omega$.  Taking the  derivative 
in the direction of $A\in L_\Omega$, one obtains:
$$\forall A\in L_\Omega \qquad \int_\Omega (\nabla u - A_0): A = 0.$$
Equivalently, there holds:
$$\left(\fint_\Omega \nabla u - A_0\right)\in L_\Omega^\perp,$$
which implies the lemma.
\end{proof}

For convenience of the reader, we now sketch the proof of (\ref{def_n}).

\begin{lemma}\label{lem222}
For every open, Lipschitz,  connected $\Omega \subset
\mathbb{R}^n$, the Korn constant under Dirichlet boundary conditions
equals $\sqrt{2}$:
\begin{equation}\label{korn-dir}
 \kappa_{0}(\Omega) = \sup\left\{\|\nabla u\|_{L^2(\Omega)};
~~ u\in W^{1,2}_0(\Omega, \mathbb{R}^n), 
~ \|D(u)\|_{L^2(\Omega)} = 1\right\} = \sqrt{2}.
\end{equation}
\end{lemma}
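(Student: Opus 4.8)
The plan is to establish the two inequalities $\kappa_0(\Omega)\le\sqrt 2$ and $\kappa_0(\Omega)\ge\sqrt 2$ separately. For the upper bound, the natural move is to exploit the fact that a compactly supported field extends by zero to all of $\mathbb{R}^n$ without changing either norm: given $u\in W^{1,2}_0(\Omega,\mathbb{R}^n)$, extend it to $\tilde u\in W^{1,2}(\mathbb{R}^n,\mathbb{R}^n)$ by setting $\tilde u=0$ outside $\Omega$, so that $\|\nabla\tilde u\|_{L^2(\mathbb{R}^n)}=\|\nabla u\|_{L^2(\Omega)}$ and $\|D(\tilde u)\|_{L^2(\mathbb{R}^n)}=\|D(u)\|_{L^2(\Omega)}$. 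Hence $\kappa_0(\Omega)\le\kappa(\mathbb{R}^n)$, and it remains only to verify $\kappa(\mathbb{R}^n)\le\sqrt 2$. This is the classical Fourier-side computation: for $u\in W^{1,2}(\mathbb{R}^n,\mathbb{R}^n)$ with Fourier transform $\hat u$, one has $\widehat{\nabla u}(\xi)=i\,\hat u(\xi)\otimes\xi$ and $\widehat{D(u)}(\xi)=\tfrac i2(\hat u(\xi)\otimes\xi+\xi\otimes\hat u(\xi))$, so by Plancherel it suffices to check the pointwise linear-algebra inequality $|\hat u\otimes\xi|^2\le 2\,|\tfrac12(\hat u\otimes\xi+\xi\otimes\hat u)|^2$ for all vectors; expanding both sides gives $|\hat u|^2|\xi|^2\le 2\bigl(\tfrac12|\hat u|^2|\xi|^2+\tfrac12(\hat u\cdot\xi)^2\bigr)=|\hat u|^2|\xi|^2+(\hat u\cdot\xi)^2$, which is obvious. (The one technical nuisance is that $\xi\mapsto\hat u(\xi)\otimes\xi$ need not itself be the Fourier transform of an $L^2$ function unless one already knows $\nabla u\in L^2$, but since $u\in W^{1,2}_0$ this is automatic; for the full-space statement of $\kappa(\mathbb{R}^n)$ one takes $\nabla u\in L^2$ as part of the hypothesis.)

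For the lower bound $\kappa_0(\Omega)\ge\sqrt 2$, the idea is to produce, for any $\varepsilon>0$, a test field $u\in W^{1,2}_0(\Omega,\mathbb{R}^n)$ with $\|\nabla u\|_{L^2}\ge(\sqrt 2-\varepsilon)\|D(u)\|_{L^2}$. Since the constant $\sqrt 2$ on $\mathbb{R}^n$ is saturated by highly oscillatory fields of the form $u(x)=v\,\phi(x/\delta)$ with a fixed vector $v$ and a phase making $\nabla u$ nearly a rank-one matrix $v\otimes\xi_0$ with $v\perp\xi_0$ — for such $v\otimes\xi_0$ the ratio $|v\otimes\xi_0|^2/|\mathrm{sym}(v\otimes\xi_0)|^2$ equals exactly $2$ — one localizes this construction inside $\Omega$. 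Concretely, fix a cube $Q\Subset\Omega$, pick orthogonal unit vectors $e_1\perp e_2$, and set $u^{(k)}(x)=\tfrac1k\,\eta(x)\,e_1\sin(k\,x\cdot e_2)$, where $\eta\in C_c^\infty(Q)$ is a fixed cutoff. As $k\to\infty$ the gradient is dominated by the term $\eta(x)\cos(kx\cdot e_2)\,e_1\otimes e_2$ (the derivatives falling on $\eta$ contribute lower-order $O(1/k)$ terms), so $\|\nabla u^{(k)}\|_{L^2}^2/\|D(u^{(k)})\|_{L^2}^2\to 2$. Taking $k$ large yields fields in $W^{1,2}_0(\Omega)$ with ratio arbitrarily close to $\sqrt 2$, hence $\kappa_0(\Omega)\ge\sqrt 2$.

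Combining the two bounds gives $\kappa_0(\Omega)=\sqrt 2$, which also proves the number $\sqrt 2$ appearing in \eqref{def_n} since the same oscillatory construction works on $\mathbb{R}^n$ directly (or, more simply, $\kappa(\mathbb{R}^n)\ge\kappa_0(B)=\sqrt 2$ for any ball $B$ by the zero-extension argument, and $\kappa(\mathbb{R}^n)\le\sqrt 2$ was shown above).

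I expect the main obstacle to be purely bookkeeping rather than conceptual: making sure the oscillatory lower-bound fields are genuinely in $W^{1,2}_0$ (handled by the fixed compactly supported cutoff $\eta$) and carefully tracking that the cutoff-derivative cross terms in $\|\nabla u^{(k)}\|^2$ and $\|D(u^{(k)})\|^2$ are $o(1)$ relative to the leading $O(1)$ oscillatory term. The Fourier computation for the upper bound is completely routine once the pointwise inequality $|v\otimes\xi|^2\le|v|^2|\xi|^2+(v\cdot\xi)^2$ is noted.
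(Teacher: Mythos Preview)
Your proof is correct, but it takes a different route than the paper's in both halves.

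For the upper bound, the paper works directly in physical space: expanding $2\int_\Omega |D(u)|^2$ yields $\int_\Omega |\nabla u|^2 + \int_\Omega |\mathrm{div}\,u|^2 + \int_\Omega\bigl(\mathrm{tr}(\nabla u)^2 - (\mathrm{tr}\,\nabla u)^2\bigr)$, and the last integrand is a null Lagrangian, hence vanishes for $u\in W^{1,2}_0$. Your Fourier argument is correct (with the minor caveat that $\hat u$ is complex, so one should write $|\hat u\cdot\xi|^2$ rather than $(\hat u\cdot\xi)^2$), and in fact Plancherel applied to your pointwise computation reproduces exactly the same identity $2\|D(u)\|^2 = \|\nabla u\|^2 + \|\mathrm{div}\,u\|^2$. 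So the two arguments are equivalent at the level of the key identity, just derived differently.

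For the lower bound, the paper observes that the identity above becomes an equality precisely when $\mathrm{div}\,u=0$, so any nontrivial compactly supported divergence-free field (e.g.\ $u=\mathrm{curl}\,v$ in dimension $3$) attains the ratio $\sqrt 2$ exactly. Your oscillatory construction $u^{(k)}=k^{-1}\eta(x)\,e_1\sin(k\,x\cdot e_2)$ is also valid and is a standard way to saturate Korn-type constants, but it only shows the supremum is $\sqrt 2$, not that it is attained; the paper's choice gives attainment for free and is shorter. Note also that your leading gradient term $\eta\cos(kx\cdot e_2)\,e_1\otimes e_2$ is itself divergence-free, which is why the ratio tends to $2$---so the two lower-bound arguments are more closely related than they first appear.
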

\begin{proof}
For every $u\in W^{1,2}(\Omega, \mathbb{R}^n)$ we have:
\begin{equation}\label{strange}
\begin{split}
2\int_\Omega|D(u)|^2 & = \int|\nabla u|^2 + \int\nabla u : (\nabla u)^T = \int
|\nabla u|^2 + \int \mbox{tr}(\nabla u)^2 \\ & 
= \int|\nabla u|^2 + \int
|\mbox{div } u|^2 + \int \left(\mbox{tr}(\nabla u)^2 - (\mbox{tr}
  \nabla u)^2\right).
\end{split}
\end{equation}
When, additionally, $\Omega$ is bounded and $u\in W^{1,2}_0(\Omega,\mathbb{R}^n)$, this
implies that:
\begin{equation*}
\begin{split}
2\int_\Omega|D(u)|^2 = \int|\nabla u|^2 + \int |\mbox{div } u|^2,
\end{split}
\end{equation*}
because $\left(\mbox{tr}(\nabla u)^2 - (\mbox{tr}
  \nabla u)^2\right) $ is a null-Lagrangean, i.e. its integral depends
only on the boundary value of $u$ on $\partial \Omega$. 
We therefore conclude that, in this case:  $\|\nabla u\|_{L^2(\Omega)} \leq
\sqrt{2}  \|D(u)\|_{L^2(\Omega)}$. The same inequality is also true on
unbounded domains, because of the density of
$\mathcal{C}^\infty_c(\Omega,\mathbb{R}^n)$ in $W_0^{1,2}(\Omega, \mathbb{R}^n)$.

To prove that $\sqrt{2}$ is optimal and
that it is attained, it is enough to take
$u\in\mathcal{C}_c^\infty(\Omega, \mathbb{R}^n)$  with $\mbox{div } u =
0$ (when $n=3$, take $u=\mbox{curl } v$ for any compactly supported
$v$). 
This achieves the proof.
\end{proof}

\medskip

We now recall the Poincar\'e inequality for tangential vector fields.
The proof, which can be found in \cite{bishop}, is deduced through 
a standard argument by contradiction.
\begin{lemma}\label{poincare}
Let $\Omega\subset\mathbb{R}^n$ be an open, bounded, Lipschitz set. 
For every $u\in W^{1,2}(\Omega,\mathbb{R}^n)$, $u\cdot \vec n = 0$ 
on $\partial\Omega$, there holds:
\begin{equation}\label{poinc}
\|u\|_{L^2(\Omega)} \leq C(\Omega) \|\nabla u\|_{L^2(\Omega)},
\end{equation}
where the constant $C(\Omega)$ depends only on $\Omega$  (it is independent of $u$).
\end{lemma}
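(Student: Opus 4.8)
The plan is the classical contradiction argument based on Rellich--Kondrachov compactness, the same one that underlies the Poincar\'e--Wirtinger inequality. Suppose the inequality (\ref{poinc}) holds for no finite constant. Then there is a sequence $\{u_k\}\subset W^{1,2}(\Omega,\mathbb{R}^n)$ with $u_k\cdot\vec n=0$ on $\partial\Omega$, normalized so that $\|u_k\|_{L^2(\Omega)}=1$, and with $\|\nabla u_k\|_{L^2(\Omega)}\to 0$. In particular $\{u_k\}$ is bounded in $W^{1,2}(\Omega,\mathbb{R}^n)$, so, passing to a subsequence which I do not relabel, $u_k\rightharpoonup u$ weakly in $W^{1,2}$ and, since $\Omega$ is bounded and Lipschitz, $u_k\to u$ strongly in $L^2(\Omega,\mathbb{R}^n)$. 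The strong convergence yields $\|u\|_{L^2(\Omega)}=1$, hence $u\not\equiv 0$, while weak lower semicontinuity of the $L^2$ norm applied to the gradients gives $\nabla u=0$. Thus $u$ equals a.e.\ a constant vector on each connected component of $\Omega$, and there is a component $\Omega'$ on which this constant $c$ is nonzero.

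Next I would pass the tangency constraint to the limit. For a bounded Lipschitz domain the trace operator $W^{1,2}(\Omega)\to L^2(\partial\Omega)$ is compact (it factors through $H^{1/2}(\partial\Omega)$, which embeds compactly into $L^2(\partial\Omega)$), so $u_k|_{\partial\Omega}\to u|_{\partial\Omega}$ strongly in $L^2(\partial\Omega,\mathbb{R}^n)$; since $\vec n\in L^\infty(\partial\Omega,\mathbb{R}^n)$ and $u_k\cdot\vec n=0$ for every $k$, the limit satisfies $u\cdot\vec n=0$ a.e.\ on $\partial\Omega$. Restricting to $\Omega'$, which is itself open, bounded and Lipschitz, this says that the constant $c\neq 0$ is tangent to $\partial\Omega'$ a.e.

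The contradiction then comes from a one-line application of the divergence theorem on $\Omega'$ to the polynomial vector field $W(x)=(c\cdot x)\,c$: one has $\mathrm{div}\,W\equiv|c|^2$, while $W\cdot\vec n=(c\cdot x)(c\cdot\vec n)=0$ on $\partial\Omega'$, so $|c|^2\,|\Omega'|=\int_{\Omega'}\mathrm{div}\,W=\int_{\partial\Omega'}W\cdot\vec n=0$, forcing $c=0$ and contradicting $c\neq 0$. Hence (\ref{poinc}) must hold with a finite constant, and this constant depends only on $\Omega$ since nothing in the argument used the individual $u_k$ beyond compactness and the boundary condition. The only points I expect to need care are the compactness of the trace map on a merely Lipschitz boundary --- which is exactly where the Lipschitz hypothesis is used --- and the elementary observation just invoked, that no nonzero constant vector field can be everywhere tangent to the boundary of a bounded open set; neither of these is a genuine obstacle.
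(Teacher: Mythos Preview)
Your argument is correct and is precisely the ``standard argument by contradiction'' that the paper alludes to; the paper does not write out a proof but refers to \cite{bishop} for exactly this approach. One minor remark: you do not actually need compactness of the trace map to pass the boundary condition to the limit---the subspace $\{v\in W^{1,2}(\Omega,\mathbb{R}^n):\ v\cdot\vec n=0\ \text{on}\ \partial\Omega\}$ is closed (as the kernel of a bounded linear functional composed with the continuous trace), hence weakly closed, so the weak limit automatically lies in it.
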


\section{The optimal Korn constant $\kappa(\Omega)$: a proof of
  Theorem \ref{th_uno} and two examples}

In the course of proof of Theorem \ref{th_uno}, we will 
use the following observation:
\begin{proposition}\label{tech}
For any $f\in L^2(\mathbb{R}^n)$ there holds:
$$\lim_{R\to\infty} R^{-n/2} \|f\|_{L^1(B_R)} = 0,$$
on the ball  $B_R = \{x\in\mathbb{R}^n, ~ |x|\leq R\}$.
\end{proposition}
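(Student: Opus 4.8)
The plan is to prove the statement $\lim_{R\to\infty} R^{-n/2}\|f\|_{L^1(B_R)} = 0$ for $f \in L^2(\mathbb{R}^n)$ by combining the Cauchy--Schwarz inequality with the absolute continuity of the integral of $|f|^2$. First I would apply Cauchy--Schwarz on the ball $B_R$ to write
\begin{equation*}
\|f\|_{L^1(B_R)} = \int_{B_R} |f| \leq |B_R|^{1/2} \left(\int_{B_R} |f|^2\right)^{1/2} = \omega_n^{1/2} R^{n/2} \|f\|_{L^2(B_R)},
\end{equation*}
where $\omega_n = |B_1|$ denotes the volume of the unit ball. Dividing by $R^{n/2}$ gives $R^{-n/2}\|f\|_{L^1(B_R)} \leq \omega_n^{1/2}\|f\|_{L^2(B_R)}$.

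It then remains to show that $\|f\|_{L^2(B_R)} \to 0$ as $R \to \infty$. This does \emph{not} follow in general; it is false for $f \in L^2$ unless we know the tail of $\|f\|_{L^2}^2$ vanishes at infinity. The point is the opposite: as $R\to\infty$, $\|f\|_{L^2(B_R)}\to \|f\|_{L^2(\mathbb{R}^n)}$, which is generally nonzero. So the naive estimate above is too lossy. Instead I would split $f$ into the part supported near the origin and the tail. Fix $\varepsilon > 0$. Since $|f|^2 \in L^1(\mathbb{R}^n)$, by the dominated convergence theorem (or absolute continuity of the integral) there exists $R_0$ such that $\int_{\mathbb{R}^n \setminus B_{R_0}} |f|^2 < \varepsilon^2$. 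Now for $R > R_0$, split $B_R = B_{R_0} \cup (B_R \setminus B_{R_0})$ and estimate the $L^1$ norm on each piece separately:
\begin{equation*}
\|f\|_{L^1(B_R)} = \|f\|_{L^1(B_{R_0})} + \|f\|_{L^1(B_R \setminus B_{R_0})} \leq \|f\|_{L^1(B_{R_0})} + \omega_n^{1/2} R^{n/2}\left(\int_{B_R\setminus B_{R_0}} |f|^2\right)^{1/2},
\end{equation*}
where the second term uses Cauchy--Schwarz on the annular region with $|B_R \setminus B_{R_0}| \leq |B_R| = \omega_n R^n$, and the integral is bounded by $\varepsilon$. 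Dividing by $R^{n/2}$:
\begin{equation*}
R^{-n/2}\|f\|_{L^1(B_R)} \leq R^{-n/2}\|f\|_{L^1(B_{R_0})} + \omega_n^{1/2}\varepsilon.
\end{equation*}
Note that $\|f\|_{L^1(B_{R_0})}$ is a finite constant (again by Cauchy--Schwarz, $\|f\|_{L^1(B_{R_0})} \leq \omega_n^{1/2} R_0^{n/2}\|f\|_{L^2(\mathbb{R}^n)} < \infty$), so the first term tends to $0$ as $R \to \infty$. Hence $\limsup_{R\to\infty} R^{-n/2}\|f\|_{L^1(B_R)} \leq \omega_n^{1/2}\varepsilon$, and since $\varepsilon > 0$ was arbitrary, the limit is $0$.

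The main (and really only) subtlety is recognizing that a single application of Cauchy--Schwarz on all of $B_R$ does not suffice, and that one must isolate a large fixed ball $B_{R_0}$ where the $L^1$ mass is some finite constant that gets killed by the $R^{-n/2}$ factor, while controlling the remaining annular contribution by the smallness of the $L^2$ tail. Everything else is routine: the volume scaling $|B_R| = \omega_n R^n$, Cauchy--Schwarz, and the dominated convergence theorem applied to $|f|^2 \mathbf{1}_{\mathbb{R}^n \setminus B_R} \to 0$ pointwise with dominating function $|f|^2$.
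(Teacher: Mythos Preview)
Your proof is correct and follows essentially the same approach as the paper: split $B_R$ into a fixed ball $B_{R_0}$ (the paper calls it $B_m$) where the $L^1$ mass is finite and gets killed by $R^{-n/2}$, and an annulus where Cauchy--Schwarz together with the smallness of the $L^2$ tail gives the $\varepsilon$ bound. The only cosmetic difference is that the paper additionally chooses $R$ large enough that $(R_0/R)^{n/2}\|f\|_{L^2}\leq\varepsilon$ to bound the inner term directly, whereas you take a $\limsup$; both are fine.
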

\begin{proof}
Fix $\epsilon>0$.  For $m$ sufficiently large, one has
$\|f\|_{L^2(\mathbb{R}^n\setminus B_m)} <\epsilon$. 
Denote by $\omega_n$ the volume of the unit ball $B_1$ in $\mathbb{R}^n$.
Take any $R>m$ so that: $\displaystyle 
\left(\frac{m}{R}\right)^{n/2}\|f\|_{L^2(\mathbb{R}^n)} \leq \epsilon.$
Then:
\begin{equation*}
\begin{split}
R^{-n/2} \|f\|_{L^1(B_R)} & = R^{-n/2} \left(\int_{B_R\setminus B_m} |f|
+ \int_{B_m} |f|\right) \\
& \leq R^{-n/2} |B_R|^{1/2}\epsilon + 
R^{-n/2} |B_m|^{1/2} \|f\|_{L^2(\mathbb{R}^n)}\\
& \leq \omega_n^{1/2}\epsilon + \left(\frac{m}{R}\right)^{n/2}\omega_n^{1/2}
 \|f\|_{L^2(\mathbb{R}^n)} \leq 2\omega_n^{1/2} \epsilon,
\end{split}
\end{equation*}
which achieves the proof.
\end{proof}

\medskip

\noindent{\bf Proof of Theorem \ref{th_uno}}

\noindent {\bf 1.} 
Without loss of generality we may assume that $0\in\Omega$.
Let $u\in W^{1,2}(\mathbb{R}^n, \mathbb{R}^n)$ with 
$\|D(u)\|_{L^2(\mathbb{R}^n)} = 1$. Define the sequence 
$u_k\in W^{1,2}(\mathbb{R}^n, \mathbb{R}^n)$ by:
$u_k(x) = k^{n/2-1} u(kx)$. One has:
$$\|\nabla u_k\|_{L^2(\mathbb{R}^n)} = \|\nabla u\|_{L^2(\mathbb{R}^n)},
\qquad \|D(u_k)\|_{L^2(\mathbb{R}^n)} = \|D(u)\|_{L^2(\mathbb{R}^n)} = 1.$$
Let now $\phi\in\mathcal{C}_c^\infty(\Omega)$ be a nonnegative function, 
equal identically to $1$ in a neighborhood of $0$,
and define: $v_k=\phi u_k$. Clearly $v_k\in W^{1,2}_0(\Omega,\mathbb{R}^n)$ and:
$$\nabla v_k = \phi\nabla u_k + u_k \otimes \nabla \phi.$$
We claim that:
\begin{eqnarray}
&&  \lim_{k\to\infty} \|\nabla v_k\|_{L^1(\Omega)} = 0, \label{uno}\\
&&  \lim_{k\to\infty} \|\nabla v_k\|_{L^2(\Omega)} 
= \|\nabla u\|_{L^2(\mathbb{R}^n)}, \label{due}\\
&&  \lim_{k\to\infty} \|D(v_k)\|_{L^2(\Omega)} 
= \|D(u)\|_{L^2(\mathbb{R}^n)} = 1. \label{tre}
\end{eqnarray}
To prove the claim, 
notice first that:
$$\lim_{k\to\infty} \|u_k\otimes\nabla\phi\|_{L^2(\Omega)}
\leq \lim_{k\to\infty} 
\|\nabla\phi\|_{L^\infty} k^{-1} \|u\|_{L^2(\mathbb{R}^n)} =0.$$
On the other hand, for all $i,j:1\ldots n$:
$$\lim_{k\to\infty} \left\|\phi\frac{\partial}{\partial x_i} u_k^j\right\|^2_{L^2(\Omega)}
= \lim_{k\to\infty} \int_{\mathbb{R}^n} \left|\phi (x/k)
 \frac{\partial}{\partial x_i} u^j(x)\right|^2~\mbox{d}x
= \left\|\frac{\partial}{\partial x_i} u^j\right\|^2_{L^2(\Omega)}.$$
Thus we obtain (\ref{due}) and (\ref{tre}). Similarly:
$$\lim_{k\to\infty} \left\|\phi\nabla u_k\right\|_{L^1(\Omega)}
\leq \lim_{k\to\infty} \|\phi\|_{L^\infty} k^{-n/2} \|\nabla u\|_{L^1(k\Omega)}
=0,$$ 
where the last equality follows by Proposition \ref{tech}. Hence we conclude
(\ref{uno}) as well.

\medskip

{\bf 2.} Notice that by Lemma \ref{projection}:
\begin{equation*}
\begin{split}
& \min_{A\in L_\Omega} \|\nabla v_k - A\|_{L^2(\Omega)} = 
\left\|\nabla v_k - \mathbb{P}_{L_\Omega} \fint_{\Omega} \nabla v_k\right\|_{L^2(\Omega)}\\
& \qquad 
\geq \|\nabla v_k\|_{L^2(\Omega)} - 
\left\|\mathbb{P}_{L_\Omega} \fint_{\Omega} \nabla v_k\right\|_{L^2(\Omega)}
\geq \|\nabla v_k\|_{L^2(\Omega)} - |\Omega|^{-1/2} \|\nabla v_k\|_{L^1(\Omega)}.
\end{split}
\end{equation*}
Now, by (\ref{uno}) and (\ref{due}), the right hand side of the above 
inequality converges to $\|\nabla u\|_{L^2(\mathbb{R}^n)}$ as $k\to \infty$.
On the other hand, by (\ref{korn}) and (\ref{def_kappa}), the left hand side 
is bounded by $\kappa(\Omega) \|D(v_k)\|_{L^2(\Omega)}$. Therefore,
passing to the limit and using (\ref{tre}), we
obtain:
$$\|\nabla u\|_{L^2(\mathbb{R}^n)}\leq \kappa(\Omega)
\|D(u)\|_{L^2(\mathbb{R}^n)} = \kappa(\Omega).$$
Recalling the definition (\ref{def_n}) the theorem follows.
\endproof

\bigskip

\begin{example}\label{square}
We now show that $\kappa(Q) = \sqrt{2}$ for $Q = [0,1]^2\subset\mathbb{R}^2$.

Firstly, observe that (see Theorem 9.4 \cite{LM})
$L_\Omega\neq\{0\}$ if and only if $\Omega$ has a rotational
symmetry. When this is not the case, then:
\begin{equation}\label{norot}
\kappa(\Omega) = \sup\Bigg\{\frac{\|\nabla u\|_{L^2(\Omega)}}{\|D(u)\|_{L^2(\Omega)}};
~~~ u\in W^{1,2}(\Omega,\mathbb{R}^n), ~ u\cdot\vec n = 0 
\mbox{ on } \partial\Omega\Bigg\}.
\end{equation}
In view of (\ref{norot}) and Theorem \ref{th_uno}, it is hence enough to
prove that for every $u\in
W^{1,2}(Q,\mathbb{R}^2)$ satisfying $u^1(0, x_2) = u^1(1, x_2) = 0$
and $ u^2(x_2, 0) = u^2(x_1, 0) = 0$ for all $x_1, x_2\in [0,1]$,
there holds:
\begin{equation}\label{ini}
\int_Q |\nabla u|^2 \leq 2\int_Q |D(u)|^2.
\end{equation}

Consider first a regular vector field $u\in\mathcal{C}^2(\bar Q,\mathbb{R}^2). $
As in (\ref{strange}), we obtain:
\begin{equation}\label{strange2}
\int_Q |D(u)|^2 = \frac{1}{2}\int_Q |\nabla u|^2 + \frac{1}{2}\int_Q
|\mbox{div } u|^2 + \int_Q \big(\partial_1u^2\partial_2u^1 - \partial_1u^1\partial_2u^2\big).
\end{equation}
Note that:
\begin{equation*} 
\begin{split}
\int_Q \big(\partial_1 u^2\partial_2 u^1 - \partial_1u^1\partial_2u^2
\big) 
= \int_Q \partial_1(u^2\partial_2 u^1) -
\int_Q \partial_2(u^2\partial_{1}u^1),
\end{split}
\end{equation*}
and that both terms in the right hand side of the
above equality integrate to $0$ on $Q$, because of the assumed boundary
condition. Thus, (\ref{strange2}) yields (\ref{ini}) for $u\in\mathcal{C}^2$.

\smallskip

It now suffices to check that every $u\in W^{1,2}(Q, \mathbb{R}^2)$ with $u\cdot
\vec n = 0$ on $\partial Q$, can be approximated by a sequence of
$\mathcal{C}^2(\bar Q, \mathbb{R}^2)$ vector fields satisfying the same boundary condition.
To this end, define the extension $\bar u^1\in W^{1,2}([0,1]\times [-1, 2],
\mathbb{R})$ of the component $u^1\in W^{1,2}(Q,\mathbb{R})$,  by:
$$ \forall x_1\in[0,1] \quad \forall x_2\in[-1,2]\qquad \bar u^1(x) =
\left\{\begin{array}{ll} u^1(x) & \mbox{if } ~x_2\in [0,1]\\
u^1(x_1,-x_2) & \mbox{if } ~x_2\in [-1,0]\\
u^1(x_1,2-x_2) & \mbox{if } ~x_2\in [1,2].
\end{array}\right.$$
Let $\phi:(-1, 2)\to\mathbb{R}$ be a nonnegative, smooth and compactly supported
function, equal to $1$ on $[0, 1]$. Then $\phi\bar u^1\in W^{1,2}_0([0,1]\times [-1,
2], \mathbb{R})$, and thus $\phi\bar u^1$ can be approximated in $W^{1,2}$
by a sequence $u_k^1\in\mathcal{C}_c^\infty([0,1]\times [-1, 2],
\mathbb{R})$. Clearly, $u_k^1$ converges to $u^1$ on $Q$, and each
$u_k^1(x) = 0$ whenever $x_1\in\{0, 1\}$. 

In a similar manner, we construct smooth approximating sequence
$\{u_k^2\}_{k\geq 1}$. Writing $u_k = (u_k^1, u_k^2)\in \mathcal{C}^\infty(\bar
Q,\mathbb{R}^2)$, we obtain the desired approximations of $u$.
\endproof
\end{example}

\medskip

\begin{example}\label{blowupi}
We now recall the construction \cite{LM} of a
family of domains $\Omega^h\subset\mathbb{R}^n$
parametrised by  $0<h\ll 1$, with the property that:
$$\kappa(\Omega^h)\to \infty\quad\mbox{ as }h\to 0.$$ 
Let $S$ denote the $(n-1)$-dimensional unit sphere in $\mathbb{R}^n$ and
let $g:S\to (0, \frac{1}{3})$ be a smooth function on $S$. Define:
$$\Omega^h = \Big\{ (1+t)x; ~~x\in S, ~ t\in\big( hg(x)-h,
hg(x)\big)\Big\}.$$
Clearly, we may request from function $g$ to be such that no $\Omega^h$ has
any rotational symmetry, and hence $L_{\Omega^h}=\{0\}$ implies
(\ref{norot}) for all $h$.

Let now $v:S\to \mathbb{R}^n$ be a tangent vector field given by a rotation:
$v(x) = a\times x$, for some $a\in\mathbb{R}^n$. Define $u^h\in
W^{1,2}(\Omega^h, \mathbb{R}^n)$:
$$u^h(x+tx) =\Big( (1+t) \mbox{Id}  + hx\otimes \nabla g(x)\Big)v(x) =
(1+t) (a\times x) + \langle a, x\times \nabla g(x)\rangle x. $$
One can check that $u^h$ is tangent at $\partial\Omega^h$ and that:
$$\|\nabla u^h\|_{L^2(\Omega^h)}\geq C h^{1/2}, \qquad 
\|D(u^h)\|_{L^2(\Omega^h)}\leq C h^{3/2}.$$
Hence we conclude the blow-up of Korn's constant:
$\kappa(\Omega^h) \geq C{h}^{-1}$ in the vanishing thickness $h\to 0$. \endproof
\end{example}

\section{The optimal Korn constant $\kappa(\Omega)$: proofs of
  Theorems \ref{th_due}, \ref{cor1} and \ref{cor2}}

\noindent{\bf Proof of Theorem \ref{th_due}}

\noindent {\bf 1.} From (ii) and (iii) we see that the sequences:
$$\{|\nabla u_k|^2\chi_\Omega ~\mbox{d}x\}_{k=1}^\infty
\quad \mbox{and} \quad \{|D(u_k)|^2\chi_\Omega ~\mbox{d}x\}_{k=1}^\infty$$ 
are bounded in the space of Radon measures $\mathcal{M}(\mathbb{R}^n)$. 
Therefore (possibly passing to subsequences), they converge weakly 
in $\mathcal{M}(\mathbb{R}^n)$ to some $\mu,\nu$, concentrated on $\bar\Omega$.
That is:
\begin{equation}\label{p_uno}
\begin{split}
\forall \phi\in\mathcal{C}_c^\infty (\mathbb{R}^n) \qquad
 \lim_{k\to\infty} \int_\Omega \phi^2 |\nabla u_k|^2 ~\mbox{d}x & =
  \int_{\mathbb{R}^n} \phi^2 ~\mbox{d}\mu,\\ 
 \lim_{k\to\infty} \int_\Omega \phi^2 |D(u_k)|^2 ~\mbox{d}x & =
  \int_{\mathbb{R}^n} \phi^2 ~\mbox{d}\nu.
\end{split}
\end{equation}
In particular, one has:
\begin{equation}\label{p_due}
\mu(\bar\Omega) = \kappa(\Omega)^2, \qquad \nu(\bar\Omega) = 1.
\end{equation}
We now assume that:
\begin{equation}\label{p_tre}
\kappa(\Omega) > \kappa(\mathbb{R}^n),
\end{equation}
and derive a contradiction.
We will distinguish two cases: when $\mu(\Omega) > 0$ and $\mu(\Omega) = 0$.

\medskip

{\bf 2.} First, notice that:
\begin{equation}\label{p_quattro}
\forall \phi\in\mathcal{C}_c^\infty (\mathbb{R}^n) \qquad
\int_{\mathbb{R}^n} \phi^2 ~\mbox{d}\mu \leq \kappa(\Omega)^2 
\int_{\mathbb{R}^n} \phi^2 ~\mbox{d}\nu.
\end{equation}
Indeed, for a given $\phi$ as above consider the sequence 
$v_k = \phi u_k\in W^{1,2}(\Omega,\mathbb{R}^n)$. 
Clearly $v_k\cdot \vec n =0$ on $\partial\Omega$ and by 
Lemma \ref{projection} we have:
\begin{equation}\label{p_cinque}
\left\|\nabla v_k - \mathbb{P}_{L_\Omega}\fint_\Omega 
\nabla v_k\right\|_{L^2(\Omega)} 
\leq \kappa(\Omega) \|D(v_k)\|_{L^2(\Omega)}.
\end{equation}
Since $\nabla v_k = \phi \nabla u_k + u_k\otimes\nabla\phi$, 
the sequence $\fint_\Omega\nabla v_k$ converges to $0$ in $\mathbb{R}^{n\times n}$
by (i). The same convergence must 
be true for the respective sequence of projections.
Similarly, $\lim_{k\to\infty}\|u_k\otimes\nabla \phi\|_{L^2(\Omega)} =0$
by (i).
Hence (\ref{p_cinque}), after passing to the limit with $k\to\infty$
yields:
$$\lim_{k\to\infty}\|\phi\nabla u_k\|_{L^2(\Omega)} 
\leq \kappa(\Omega) \lim_{k\to\infty}\|\phi D(u_k)\|_{L^2(\Omega)},$$
which in view of (\ref{p_uno}) proves (\ref{p_quattro}).

Assume now that $\mu(\Omega)>0$. In this case we are ready 
to derive a contradiction.
Let $B$ be an open ball, compactly contained in $\Omega$, with $\mu(B)>0$.
By (\ref{p_quattro}):
\begin{equation}\label{p_sei}
\mu(\bar\Omega\setminus B) \leq \kappa(\Omega)^2 \nu(\bar\Omega\setminus B).
\end{equation}
On the other hand, recalling the definition (\ref{def_n}) and
reasoning exactly as in the proof of (\ref{p_quattro}), we get:
$$\forall \phi\in\mathcal{C}_c^\infty (B) \qquad \int_B \phi^2~\mbox{d}\mu
\leq \kappa(\mathbb{R}^n)^2 \int_B \phi^2~\mbox{d}\nu,$$
which implies:
\begin{equation}\label{p_sette}
\mu(B)\leq \kappa(\mathbb{R}^n)^2\nu(B).
\end{equation}
Now, both sides of (\ref{p_sette}) are positive, so by (\ref{p_tre}):
$\mu(B) < \kappa(\Omega)^2\nu(B)$. Together with (\ref{p_sei}) this yields:
$$\mu(\bar\Omega) < \kappa(\Omega)^2 \nu(\bar\Omega),$$
contradicting (\ref{p_due}).

\medskip

{\bf 3.} It remains to consider the case $\mu(\Omega) = 0,$ when the measure
$\mu$ concentrates on $\partial\Omega$, due to the lack of the 
equiintegrability of the sequence $\{|\nabla u_k|^2\}_{k=1}^\infty$ 
close to $\partial\Omega$.
We will prove that:
\begin{equation}\label{p_nove}
\mu(\partial\Omega) \leq \kappa(\mathbb{R}^n)^2\nu(\partial\Omega).
\end{equation}
Both sides of (\ref{p_nove}) are positive, and so (\ref{p_tre}) in view of 
the assumption $\mu(\Omega) = 0$ implies:
$$\mu(\bar\Omega) = \mu(\partial\Omega) < \kappa(\Omega)^2\nu(\partial\Omega)
\leq \kappa(\Omega)^2\nu(\bar\Omega), $$
contradicting (\ref{p_due}). This will end the proof of the theorem.

\medskip

Towards establishing (\ref{p_nove}), let $\theta:[0,\infty)\longrightarrow [0,1]$
be a smooth, non-increasing function such that:
$$\theta(t) = 1 \mbox{ for } t\in [0,1], \qquad 
\theta(t) = 0 \mbox{ for } t\geq 2.$$
Define: $\phi_k(x) =\theta(k\mbox{dist}(x,\partial\Omega))$. 
For large $k$ we have $\phi_k\in\mathcal{C}_c^\infty
((\partial\Omega)_\epsilon)$ on a small open neighborhood $(\partial\Omega)_\epsilon$ 
of $\partial\Omega$.
By (\ref{p_uno}), for some increasing sequence $\{n_k\}_{k=1}^\infty$:
\begin{equation}\label{p_otto}
\mu(\partial\Omega) = \lim_{k\to\infty} \|\phi_k\nabla u_{n_k}\|^2_{L^2(\Omega)},
\qquad \nu(\partial\Omega) 
= \lim_{k\to\infty} \|\phi_k D(u_{n_k})\|^2_{L^2(\Omega)}.
\end{equation}
To simplify the notation, we will pass to subsequences and write $n_k = k$.

Define the extension of $u_k$ on $(\partial\Omega)_\epsilon$ 
by reflecting the normal components oddly and 
tangential components evenly, across $\partial\Omega$. 
That is, denoting by $\pi:(\partial\Omega)_\epsilon
\longrightarrow\partial\Omega$ the projection onto $\partial\Omega$ 
along the normal vectors $\vec n$, so that:
$$\big(x-\pi(x) \big) \parallel \vec n(\pi(x)) \qquad \forall x\in (\partial\Omega)_\epsilon,$$
let, for all $x\in (\partial\Omega)_\epsilon\setminus \Omega$:
\begin{equation}\label{p_dieci}
\begin{split}
& u_k(x)\cdot \vec n(\pi(x)) = - u_k(2\pi(x) - x)\cdot\vec n(\pi(x)),\\
& u_k(x)\cdot\tau = u_k(2\pi(x) - x)\cdot\tau\qquad 
\forall \tau\in T_{\pi(x)}\partial\Omega.
\end{split}
\end{equation}
Since $u_k\cdot \vec n = 0$ on $\partial\Omega$, the above defined 
extension $u_k$ is still $W^{1,2}$ regular. By (\ref{def_n}) there holds:
$$\|\nabla (\phi_k u_k)\|_{L^2(\mathbb{R}^n)}\leq \kappa(\mathbb{R}^n)
\|D(\phi_k u_k)\|_{L^2(\mathbb{R}^n)}.$$
Again, by taking $\{n_k\}$ in (\ref{p_otto}) 
converging to $\infty$ sufficiently fast,
we may without loss of generality assume that
$\|u_k\|_{L^2(\Omega)}\leq 1/k^2$. Therefore:
\begin{equation}\label{p_undici}
\lim_{k\to\infty} \|\phi_k\nabla u_k\|_{L^2(\mathbb{R}^n)}
\leq \kappa(\mathbb{R}^n) 
\lim_{k\to\infty}\|\phi_k D(u_k)\|_{L^2(\mathbb{R}^n)}.
\end{equation}
Consider the quantity:
$$I = \lim_{k\to\infty}\Bigg\{\int_{\mathbb{R}^n\setminus\Omega}
|\phi_k\nabla u_k|^2 - \int_\Omega |\phi_k\nabla u_k|^2 \Bigg\}.$$
After changing the variables in the first integral and noting that:
$$\det \nabla (2\pi(x) - x) = \det ~ (2\nabla\pi(x) - \mbox{Id})
= -1 + \mathcal{O}(1) |x-\pi(x)|,$$
we obtain:
\begin{equation}\label{p_dodici}
\begin{split}
I & = \lim_{k\to\infty} \int_\Omega\Big\{|\phi_k(x)\nabla u_k(2\pi(x) - x)|^2
- |\phi_k(x)\nabla u_k(x)|^2\Big\}~\mbox{d}x\\
& = \lim_{k\to\infty}\int_{\Omega\cap\{\mathrm{dist}(x,\partial\Omega)<1/k\}}
\Big\{|\nabla u_k(2\pi(x) - x)|^2 - |\nabla u_k(x)|^2\Big\}~\mbox{d}x.
\end{split}
\end{equation}
The definition of extension (\ref{p_dieci}) yields now the following identities,
for each $x\in (\partial\Omega)_\epsilon$ and each 
$\tau,\eta\in T_{\pi(x)}\partial\Omega$:
\begin{equation}\label{p_tredici}
\begin{split}
& \partial_{\tau} (u_k\cdot\eta) 
(2\pi(x) - x) = \Big(1+\mathcal{O}(1) |x-\pi(x)|\Big)
\partial_{\tau} (u_k\cdot\eta)(x),\\
& \partial_{\vec n(\pi(x))}(u_k\cdot\eta) (2\pi(x) - x) = 
- \partial_{\vec n(\pi(x))}(u_k\cdot\eta) (x),\\
& \partial_{\tau}(u_k \cdot \vec n(\pi(x))(2\pi(x) - x) =
\Big(-1+\mathcal{O}(1) |x-\pi(x)|\Big)
\partial_{\tau}(u_k \cdot \vec n(\pi(x)) (x),\\
& \partial_{\vec n(\pi(x))}(u_k\cdot \vec n(\pi(x))(2\pi(x) - x) =
\partial_{\vec n(\pi(x))}(u_k\cdot \vec n(\pi(x)) (x).
\end{split}
\end{equation}
Since $\eta\partial_\tau v_k = \partial_\tau (v_k\eta) 
- v_k\partial_\tau \eta$, we see that equating the contribution 
of all components in (\ref{p_dodici}) and recalling (iii) we have:
$$I=0.$$
In the same manner, (\ref{p_tredici}) implies that $|D(u_k)(2\pi(x)-x)|^2$
equals to $|D(u_k)(x)|^2$ plus lower order terms whose integrals on
$\Omega\cap\{\mbox{dist}(x,\partial\Omega)<1/k\}$ vanish, as 
$k\longrightarrow \infty$.
Hence also:
$$II = \lim_{k\to\infty}\Bigg\{\int_{\mathbb{R}^n\setminus\Omega}
|\phi_k D(u_k)|^2 - \int_\Omega |\phi_k D(u_k)|^2 \Bigg\} = 0.$$
Therefore:
\begin{equation}\label{p_quattordici}
\begin{split}
& \lim_{k\to\infty} \|\phi_k \nabla u_k\|_{L^2(\mathbb{R}^n)}
= 2 \lim_{k\to\infty}\|\phi_k \nabla u_k\|_{L^2(\Omega)},\\
& \lim_{k\to\infty} \|\phi_k D(u_k)\|_{L^2(\mathbb{R}^n)}
= 2 \lim_{k\to\infty}\|\phi_k D(u_k)\|_{L^2(\Omega)}.
\end{split}
\end{equation}
Combining (\ref{p_quattordici}), (\ref{p_undici}) with (\ref{p_otto}) 
proves (\ref{p_nove}).
\endproof

\medskip

\noindent{\bf Proof of Theorem \ref{cor1}}

\noindent It is enough to assume that $A_0=0$. 
Let $\{u_k\}_{k=1}^\infty$ be a maximizing sequence of (\ref{def_kappa}),
that is: $u_k\in W^{1,2}(\Omega, \mathbb{R}^n)$, $u_k\cdot\vec n = 0$ 
on $\partial\Omega$, $\|D(u_k)\|_{L^2(\Omega)}=1$ and
$\lim_{k\to\infty} \left\|\nabla u_k  - \mathbb{P}_{L_\Omega}
\fint_\Omega\nabla u_k\right\|_{L^2(\Omega)} = \kappa(\Omega)$.

By modifying $u_k$ we may, without loss of generality, assume that:
\begin{equation}\label{cor1_0}
\mathbb{P}_{L_\Omega}\fint\nabla u_k= 0,
\qquad \lim_{k\to\infty} \|\nabla u_k \|_{L^2(\Omega)} = \kappa(\Omega).
\end{equation}
Using Lemma \ref{poincare} (after possibly passing to a subsequence), 
we have:
\begin{equation}\label{cor1_1}
u_k \rightharpoonup u \quad \mbox{ weakly in } W^{1,2}(\Omega, \mathbb{R}^n),
\end{equation}
for some $u$ satisfying $u\cdot\vec n = 0$ on $\partial\Omega$.

We now show that (\ref{exact}) holds with $A_0=0$.
First of all, by applying Theorem \ref{th_due} to the sequence $\{u_k\}$, 
we see that $u\neq 0$. 
Further, (\ref{cor1_1}) implies that 
$\mathbb{P}_{L_\Omega} \fint \nabla u = 
\lim_{k\to\infty} \mathbb{P}_{L_\Omega} \fint \nabla u_k = 0$, so:
\begin{equation}\label{cor1_2}
\|\nabla u\|_{L^2(\Omega)} \leq \kappa(\Omega) \|D(u)\|_{L^2(\Omega)}.
\end{equation}
Since $\mathbb{P}_{L_\Omega}\fint\nabla (u_k-u)= 0$, there also holds:
$$\|\nabla (u_k-u)\|_{L^2(\Omega)} \leq \kappa(\Omega) \|D(u_k-u)\|_{L^2(\Omega)}.$$
Squaring both sides of the above inequality, passing to the limit
with $k\to \infty$ and recalling (\ref{cor1_0}) and (\ref{cor1_1}),
we obtain:
$$\kappa(\Omega)^2 - \|\nabla u\|^2_{L^2(\Omega)}\leq
\kappa(\Omega)^2\left(1 - \|D(u)\|^2_{L^2(\Omega)}\right).$$
Together with (\ref{cor1_2}) this proves:
$$\|\nabla u\|_{L^2(\Omega)} = \kappa(\Omega)\|D(u)\|_{L^2(\Omega)},$$
yielding the result.
\endproof

\medskip

\noindent{\bf Proof of Theorem \ref{cor2}}

\noindent
{\bf 1.} Let $E$ be the set in (\ref{space_exact}). It is clear that $u\in E$ implies
$\lambda u\in E$, for all $\lambda\in\mathbb{R}$. If $u_1, u_2\in E$, then
by Lemma \ref{projection}:
\begin{equation}\label{c_1}
\left\|\nabla u_i - \mathbb{P}_{L_\Omega} \fint_\Omega \nabla u_i \right\|_{L^2(\Omega)}
= \kappa(\Omega) \|D(u_i)\|_{L^2(\Omega)} \qquad \forall i=1,2.
\end{equation}
On the other hand, by the linearity of the operator $\mathbb{P}_{L_\Omega}$ and
by (\ref{korn}), (\ref{def_kappa}):
\begin{equation}\label{c_2}
\left\|\nabla (u_1\pm u_2) - \left(\mathbb{P}_{L_\Omega} \fint_\Omega \nabla u_1 \pm
\mathbb{P}_{L_\Omega} \fint_\Omega \nabla u_2\right) \right\|_{L^2(\Omega)}
\leq \kappa(\Omega) \|D(u_1\pm u_2))\|_{L^2(\Omega)}.
\end{equation}
Squaring the two inequalities in (\ref{c_2}) and equating the terms from (\ref{c_1})
we obtain:
$$\left\langle \nabla u_1 -  \mathbb{P}_{L_\Omega} \fint_\Omega \nabla u_1,
~ \nabla u_2 -  \mathbb{P}_{L_\Omega} \fint_\Omega \nabla u_2\right\rangle_{L^2(\Omega)}
= \kappa(\Omega)^2 \left\langle D(u_1), D(u_2)\right\rangle_{L^2(\Omega)}.$$
Therefore, (\ref{c_2}) is actually true as the equality.
We hence conclude that $u_1+ u_2\in E$, proving that $E$ is a linear
space.

The closedness of $E$ follows by noting that if a sequence $u_k$ converges to $u$ in 
$W^{1,2}(\Omega,\mathbb{R}^n)$ then the minimizing matrices 
$\mathbb{P}_{L_\Omega} \fint \nabla u_k$ converge to 
$\mathbb{P}_{L_\Omega} \fint \nabla u$.

\medskip

{\bf 2.} To prove the second claim, we argue by contradiction.
Assume that the space $E$ is of infinite dimension. Then it admits a Hilbertian
(orthonormal in $W^{1,2}(\Omega,\mathbb{R}^n)$) basis $\{u_k\}_{k=1}^\infty$.
It is easy to see that there must be:
\begin{equation}\label{c_4}
u_k \rightharpoonup 0 \quad \mbox{ weakly in } 
W^{1,2}(\Omega, \mathbb{R}^n).
\end{equation}
We now notice that:
\begin{equation}\label{c_5}
\liminf_{k\to\infty} \|D(u_k)\|_{L^2(\Omega)} > 0.
\end{equation}
Because otherwise, by Korn's inequality (\ref{korn}) there would be:
$$\liminf_{k\to\infty} \left\|\nabla u_k 
- \mathbb{P}_{L_\Omega}\fint_\Omega\nabla u_k\right\|_{L^2(\Omega)} = 0,$$
and since by (\ref{c_4}) $\lim_{k\to\infty}\fint\nabla u_k = 0$, there
follows that $\liminf_{k\to\infty}\|\nabla u_k \|_{L^2(\Omega)} = 0$.
In view of the Poincar\'e inequality (\ref{poinc}), we hence obtain
$\liminf_{k\to\infty}\|u_k\|_{W^{1,2}(\Omega)}=0$,
in contradiction with the orthonormality of the sequence $\{u_k\}_{k=1}^\infty$.

\medskip

Define: $v_k = u_k/\|D(u_k)\|_{L^2(\Omega)}$. Clearly, there holds:
$$\|D(v_k)\|_{L^2(\Omega)} =1, \qquad \|\nabla v_k\|_{L^2(\Omega)} 
= \kappa(\Omega),$$
and because of (\ref{c_5}) we also have: 
$ v_k \rightharpoonup 0$ weakly in $W^{1,2}(\Omega,\mathbb{R}^n)$.
By Theorem \ref{th_due} there follows $\kappa(\Omega) =
\kappa(\mathbb{R}^n) = \sqrt{2}$,
which is a desired contradiction.
\endproof

\section{The optimal geometric  rigidity constant in $\mathbb{R}^2$}

To prove Theorems \ref{maxmin}, \ref{th2}, \ref{th3}
we need some preliminary discussion.

\begin{lemma}\label{after}
Assume that $w\in L^2_{loc}(\mathbb{R}^n)$ and $\Delta w = 0$. 
If $w=f+g$ with $f\in L^2(\mathbb{R}^n)$ and $g\in L^\infty
(\mathbb{R}^n)$, then $w\equiv const$.
\end{lemma}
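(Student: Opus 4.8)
The plan is to exploit harmonicity via the mean value property together with the decomposition $w = f + g$ with $f \in L^2$ and $g \in L^\infty$. The key point is that a harmonic function equals its average over any ball centered at a point, and the $L^2$ part of that average decays while the $L^\infty$ part stays bounded; combining this across two concentric balls will force $w$ to be locally constant and hence (by connectedness of $\mathbb{R}^n$ and real-analyticity of harmonic functions) globally constant.

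Concretely, fix a point $x_0 \in \mathbb{R}^n$ and, for $R > 0$, write the mean value property $w(x_0) = \fint_{B_R(x_0)} w = \fint_{B_R(x_0)} f + \fint_{B_R(x_0)} g$. By Cauchy--Schwarz, $\big|\fint_{B_R(x_0)} f\big| \le |B_R|^{-1/2} \|f\|_{L^2(\mathbb{R}^n)} = C R^{-n/2} \|f\|_{L^2}$, which tends to $0$ as $R \to \infty$; meanwhile $\big|\fint_{B_R(x_0)} g\big| \le \|g\|_{L^\infty}$. This alone only bounds $w$, so the second step is to compare two points. Take $x_0, x_1 \in \mathbb{R}^n$ and apply the mean value property at each over balls of the same large radius $R$ chosen large enough that $B_R(x_0)$ and $B_R(x_1)$ overlap heavily; then $w(x_0) - w(x_1) = \big(\fint_{B_R(x_0)} - \fint_{B_R(x_1)}\big)(f+g)$. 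The $f$-contribution is again $O(R^{-n/2})\|f\|_{L^2} \to 0$. For the $g$-contribution, $\big(\fint_{B_R(x_0)} - \fint_{B_R(x_1)}\big) g = |B_R|^{-1} \int_{\mathbb{R}^n} \big(\chi_{B_R(x_0)} - \chi_{B_R(x_1)}\big) g$, and the symmetric difference $B_R(x_0) \,\triangle\, B_R(x_1)$ has measure $O(R^{n-1}|x_0-x_1|)$ while $\|g\|_{L^\infty} < \infty$; hence this term is $O(R^{-1}|x_0-x_1|\,\|g\|_{L^\infty}) \to 0$ as $R \to \infty$. Therefore $w(x_0) = w(x_1)$ for all $x_0, x_1$, i.e. $w \equiv \mathrm{const}$.

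One should first observe that $w$, being harmonic (hence smooth, indeed real-analytic), is genuinely a pointwise-defined function so the mean value property applies; the hypothesis $w \in L^2_{loc}$ guarantees the averages make sense and, combined with $\Delta w = 0$ and Weyl's lemma, that $w$ is smooth. The main obstacle is the $g$-term: controlling $\big(\fint_{B_R(x_0)} - \fint_{B_R(x_1)}\big) g$ requires the symmetric-difference volume estimate $|B_R(x_0) \triangle B_R(x_1)| = O(R^{n-1})$ for fixed $|x_0 - x_1|$, which is elementary but is the crux of why merely bounded (not decaying) $g$ still yields vanishing of the difference of averages. Everything else — Cauchy--Schwarz for the $f$-term, the mean value property — is routine, and Proposition \ref{tech} is not even needed here since a crude bound suffices.
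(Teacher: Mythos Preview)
Your proof is correct and follows essentially the same route as the paper's: compare $w$ at two points via the mean value property over large balls, split $w=f+g$, and show both contributions to the difference vanish as $R\to\infty$ using Cauchy--Schwarz for the $L^2$ part and the symmetric-difference volume estimate $|B_R(x_0)\triangle B_R(x_1)| = O(R^{n-1})$ for the $L^\infty$ part. The only cosmetic difference is that you bound each $\fint_{B_R} f$ separately (yielding $O(R^{-n/2})$), whereas the paper applies Cauchy--Schwarz directly on the symmetric difference; both work equally well.
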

\begin{proof}
Fix $x_0, y_0\in\mathbb{R}^n$. For any $r>0$ we have:
\begin{equation*}
\begin{split}
|w(x_0) - w(y_0)| & = \left |\fint_{B_r(x_0)} w - \fint_{B_r(y_0)} w\right| 
= \frac{1}{|B_r|} \left|\int_{B_r(x_0) \Delta B_r(y_0)} w \right| \\ &
\leq \left(\frac{1}{|B_r|} \int_{B_r(x_0) \Delta B_r(y_0)} | f | \right) 
+ \left(\frac{1}{|B_r|} \int_{B_r(x_0) \Delta B_r(y_0)} | g | \right)
\\ & \leq \frac{|B_r(x_0)\Delta B_r(y_0)|^{1/2}}{|B_r|} \|f\|_{L^2}
+  \frac{|B_r(x_0)\Delta B_r(y_0)|}{|B_r|} \|g\|_{L^\infty} \\ &
\leq \left(\frac{1}{|B_r|}  + \frac{|B_r(x_0)\Delta B_r(y_0)|}{|B_r|}
\right) \left(\|f\|_{L^2} + \|g\|_{L^\infty}\right),
\end{split}
\end{equation*}
where by $\Delta$ we denote the symmetric difference of two sets:
$B_1\Delta B_2 = (B_1\setminus B_2)\cup (B_2\setminus B_1)$.
The quantity in the first parentheses above clearly converges to $0$ as
$r\to\infty$.
Therefore $w(x_0) = w(y_0)$, which achieves the proof.
\end{proof}

\begin{lemma}\label{dec_approx}
Let $f\in L^2(\mathbb{R}^2, \mathbb{R}^{2\times 2})$ and let $u\in
W^{1,2}_{loc}(\mathbb{R}^2,\mathbb{R}^2)$ satisfy:
\begin{equation}\label{jeden}
-\Delta u = \mathrm{div } ~f \qquad \mbox{in }  \mathcal{D}'(\mathbb{R}^2).
\end{equation}
Then $u$ can be decoupled as:
\begin{equation}\label{dwa}
u = v + w; \qquad v, w \in L^2_{loc}, ~~ \nabla v \in L^2, ~~ \nabla w
\in L^2_{loc}, ~~ -\Delta w = 0 ~\mbox{ in } \mathbb{R}^2.
\end{equation}
Moreover:
\begin{equation}\label{trzy}
\nabla v = \lim_{m\to\infty}\nabla v_m
\qquad \mbox{ strongly in } L^2(\mathbb{R}^n),
\mbox{ for some }  ~ v_m \in \mathcal{C}_c^\infty(\mathbb{R}^2,\mathbb{R}^2).
\end{equation}
\end{lemma}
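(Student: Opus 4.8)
The plan is to produce the harmonic part $w$ as a limit of harmonic functions obtained by solving \eqref{jeden} on a sequence of growing balls with zero boundary data, and then to identify $v = u - w$ as the remainder. First I would, for each $m \in \mathbb{N}$, let $v_m \in W^{1,2}_0(B_m, \mathbb{R}^2)$ be the unique weak solution of $-\Delta v_m = \mathrm{div}\, f$ in $B_m$ with $v_m = 0$ on $\partial B_m$; testing against $v_m$ and using Cauchy--Schwarz gives the uniform bound $\|\nabla v_m\|_{L^2(B_m)} \le \|f\|_{L^2(\mathbb{R}^2)}$, independent of $m$. Extending $v_m$ by $0$ outside $B_m$, one has $v_m \in W^{1,2}(\mathbb{R}^2,\mathbb{R}^2)$ with gradients bounded in $L^2(\mathbb{R}^2)$; approximating each $v_m$ by $\mathcal{C}_c^\infty$ maps (mollification plus cutoff, as in the density of $\mathcal{C}_c^\infty(B_m)$ in $W^{1,2}_0(B_m)$) will let me arrange \eqref{trzy} once I have identified the limit.

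The key step is to show that $\{\nabla v_m\}$ actually \emph{converges strongly} in $L^2(\mathbb{R}^2)$, not merely weakly along a subsequence. For this I would compare $v_m$ and $v_{m'}$ for $m' > m$: the difference $v_{m'} - v_m$ is harmonic on $B_m$ and lies in $W^{1,2}_0(B_{m'})$, so
\[
\int_{B_{m'}} |\nabla v_{m'} - \nabla v_m|^2 = \int_{B_{m'}} \nabla(v_{m'}-v_m) : f - \int_{B_m} \nabla(v_{m'}-v_m):f + (\text{harmonic cross terms}),
\]
and a cleaner route is the orthogonality coming from the variational characterization: $\nabla v_m$ is the $L^2(B_m)$-orthogonal projection of (a potential for) $f$ onto gradients of $W^{1,2}_0(B_m)$ maps, so the energies $\|\nabla v_m\|_{L^2}^2$ are nondecreasing in $m$, bounded, hence convergent, and the nestedness of the projection spaces gives $\|\nabla v_{m'} - \nabla v_m\|_{L^2}^2 = \|\nabla v_{m'}\|_{L^2}^2 - \|\nabla v_m\|_{L^2}^2 \to 0$. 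Thus $\nabla v_m \to G$ strongly in $L^2(\mathbb{R}^2,\mathbb{R}^{2\times 2})$ for some $G$; since each $\nabla v_m$ is a gradient and gradients form a closed subspace, $G = \nabla v$ for some $v \in W^{1,2}_{loc}$ (fix the constant, e.g. via Poincaré on a fixed ball so that $v \in L^2_{loc}$), and $-\Delta v = \mathrm{div}\, f$ in $\mathcal{D}'(\mathbb{R}^2)$ by passing to the limit in the weak formulation.

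Finally I would set $w = u - v \in W^{1,2}_{loc}(\mathbb{R}^2,\mathbb{R}^2)$; then $-\Delta w = -\Delta u - \mathrm{div}\, f = 0$ in $\mathcal{D}'(\mathbb{R}^2)$ by \eqref{jeden}, and by Weyl's lemma $w$ is (represented by) a smooth harmonic function, with $\nabla w = \nabla u - \nabla v \in L^2_{loc}$. Together with $\nabla v \in L^2(\mathbb{R}^2)$ and $v, w \in L^2_{loc}$ this is exactly \eqref{dwa}, and \eqref{trzy} holds by the strong convergence just established together with the $\mathcal{C}_c^\infty$-approximation of each $v_m$ (diagonalizing in $m$ and the mollification parameter). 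The main obstacle is precisely the upgrade from weak to strong convergence of $\nabla v_m$ in $L^2$ of the whole plane; the monotonicity-of-projection-energies argument circumvents any need for compactness at infinity, which is unavailable on the unbounded domain.
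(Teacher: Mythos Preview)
Your argument is correct. In fact, the orthogonality computation you sketch is exactly right: for $m'>m$, testing the equation for $v_{m'}$ against $v_m\in W^{1,2}_0(B_m)\subset W^{1,2}_0(B_{m'})$ and comparing with the equation for $v_m$ tested against itself gives $\langle \nabla v_{m'},\nabla v_m\rangle_{L^2}=\|\nabla v_m\|_{L^2}^2$, hence $\|\nabla v_{m'}-\nabla v_m\|_{L^2}^2=\|\nabla v_{m'}\|_{L^2}^2-\|\nabla v_m\|_{L^2}^2$, and the Cauchy property follows from monotonicity and boundedness of the energies.

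Your route differs from the paper's \emph{main} proof in how the limit is identified and how strong convergence is obtained. The paper first extracts a weakly convergent subsequence $\nabla v_m\rightharpoonup z$ in $L^2(\mathbb{R}^2)$, then uses the Helmholtz decomposition together with the Liouville-type Lemma~\ref{after} to show that $z$ is a gradient, and finally invokes Mazur's theorem to upgrade weak to strong convergence (of convex combinations) for \eqref{trzy}. Your projection/monotone-energy argument bypasses all three of these ingredients: you get strong convergence of the full sequence directly, and the identification of the limit as a gradient is immediate from the closedness of gradient fields in $L^2$ (equivalently, $\mathrm{curl}$ is continuous in $\mathcal{D}'$ and $\mathbb{R}^2$ is simply connected). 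This is precisely the alternative the paper records in the Remark following the lemma, so your approach is not new to the paper but is the more economical of the two presented there: it avoids Helmholtz, Liouville, and Mazur at the cost of the single Pythagorean identity above.
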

\begin{proof}
For each $m\in\mathbb{N}$, let $v_m$ be the solution to: 
\begin{equation}\label{cztery}
\left\{\begin{array}{l} v_m \in W_0^{1,2}(B_m)\\ \displaystyle{
-\int_{\mathbb{R}^2} \nabla v_m : \nabla \phi = \int_{\mathbb{R}^2} f
: \nabla \phi \qquad \forall \phi\in W_0^{1,2}(B_m),}
\end{array}\right.
\end{equation}
whose existence and uniqueness follow from the Lax-Milgram theorem,
together with:
$$\|\nabla v_m\|_{L^2} \leq \|f\|_{L^2(B_m)} \leq \|f\|_{L^2(\mathbb{R}^2)}.$$
Therefore, passing to a subsequence:
\begin{equation}\label{piec}
\nabla v_m \rightharpoonup z \quad \mbox{ weakly in } L^2(\mathbb{R}^2)
\end{equation}
and also :
\begin{equation}\label{szesc}
\mbox{curl } z = 0 ~~ \mbox{ in } \mathcal{D}'(\mathbb{R}^2).
\end{equation}

\smallskip

Condition (\ref{szesc}) is now equivalent to: $z = \nabla
v$. This can be seen, for example, via Helmholtz decomposition
\cite{farwig}:
$$z = z_0 + \nabla v; \qquad v\in L^2_{loc}, ~~ \nabla v, z_0\in L^2,
~~ \mbox{div } z_0 = 0 \mbox{ in } \mathcal{D}'.$$
Since from (\ref{szesc}) also $\mbox{curl } z_0=0$, hence the
components of $z_0$ satisfy the Cauchy-Riemann equations, and therefore $\Delta
z_0 = 0$. Recalling that $z_0\in L^2(\mathbb{R}^2)$ it follows by
Lemma \ref{after} that $z_0 = 0$.
Consequently, by (\ref{piec}):
\begin{equation}\label{siedem}
\nabla v_m \rightharpoonup \nabla v \quad \mbox{ weakly in } L^2(\mathbb{R}^2).
\end{equation}
Passing to the limit in (\ref{cztery}), we obtain: $-\Delta v =
\mbox{div } f$ in $\mathcal{D}'$, hence $-\Delta w = 0$, for $w=u-v$
and (\ref{dwa}) is proven.

\smallskip

Finally, by Mazur's theorem and (\ref{siedem}), $\nabla v$
is the strong $L^2$-limit of $\nabla \tilde v_m$ which are gradients of
some finite (in fact, convex) linear combinations $\tilde v_m$ of
$v_m$. Clearly, each $\tilde v_m\in W^{1,2}_0(B_{r_m})$ and the result in
(\ref{trzy}) follows by density of $\mathcal{C}_c^\infty(B_{r_m})$ in $W_0^{1,2}(B_{r_m})$. 
\end{proof}

\begin{remark}
Note that one can directly show that $\nabla v_m$ in Lemma
\ref{dec_approx} converges strongly in $L^2(\mathbb{R}^2)$.
Let $k > m$. Extending $v_m$ by zero to $\mathbb{R}^2$, so that $v_m \in W^{1,2}_0(B_k)$, and taking
$\phi = v_m$ in the equation (\ref{cztery}) written for $v_k$, we get:
$$ \int_{B_k} \nabla v_k : \nabla v_m = - \int_{B_k} f : \nabla v_m = - \int_{B_m}
f : \nabla v_m = \int_{B_m} |\nabla v_m|^2. $$
The last equality above follows by taking $\phi = v_m$ in the equation
(\ref{cztery}) written for $v_m$.
Now, passing $m \to \infty$ implies, by the weak convergence in (\ref{piec}):
$$ \lim_{m \to \infty}  \int_{\mathbb{R}^2} |\nabla v_m|^2 =
\int_{\mathbb{R}^2} \nabla v_k : \nabla v.$$
Finally, passing $k \to \infty$ yields:
$$ \lim_{m \to \infty}  \int_{\mathbb{R}^2} |\nabla v_m|^2 = \int_{\mathbb{R}^2} | \nabla v|^2.  $$
The claim (\ref{trzy}) now follows, since convergence of norms
in presence of the weak convergence implies 
strong convergence in $L^2(\mathbb{R}^2)$.  
\end{remark}

\begin{lemma}\label{lemdet}
Let $u\in W_{loc}^{1,2} (\mathbb{R}^2, \mathbb{R}^2)$ and $\nabla u\in
L^2(\mathbb{R}^2)$. Then:
\begin{equation}\label{det}
\int_{\mathbb{R}^2} \det \nabla u = 0.
\end{equation}
\end{lemma}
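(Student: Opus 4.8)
The identity $\det\nabla u=\partial_1u^1\,\partial_2u^2-\partial_2u^1\,\partial_1u^2$ is a null Lagrangian: for smooth compactly supported maps it integrates to zero, since $\det\nabla u=\partial_1(u^1\partial_2u^2)-\partial_2(u^1\partial_1u^2)$ and both terms are exact divergences. The plan is to reduce the general case to this smooth compactly supported case by an approximation argument, using the density result already available to us. The key point is that $\det\nabla u$ is a \emph{continuous} quadratic form on $\nabla u\in L^2$ in the appropriate sense: if $\nabla u_m\to\nabla u$ strongly in $L^2(\mathbb{R}^2)$, then $\det\nabla u_m\to\det\nabla u$ strongly in $L^1(\mathbb{R}^2)$, because $\det$ is bilinear in the entries and $\|\det\nabla u_m-\det\nabla u\|_{L^1}$ is controlled by $\|\nabla u_m-\nabla u\|_{L^2}\big(\|\nabla u_m\|_{L^2}+\|\nabla u\|_{L^2}\big)$ via Cauchy--Schwarz.

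\textbf{Main steps.} First I would observe that $u$ itself solves an equation of the form \eqref{jeden}: indeed $-\Delta u=-\mathrm{div}(\nabla u)$, so taking $f=-\nabla u\in L^2(\mathbb{R}^2,\mathbb{R}^{2\times2})$ we are exactly in the setting of Lemma \ref{dec_approx}. Applying that lemma gives the decomposition $u=v+w$ with $\nabla v\in L^2$, $-\Delta w=0$, and $\nabla w\in L^2_{loc}$; but since $\nabla u\in L^2$ and $\nabla v\in L^2$ we get $\nabla w=\nabla u-\nabla v\in L^2(\mathbb{R}^2)$, and the components of $\nabla w$ are harmonic $L^2$ functions on $\mathbb{R}^2$, hence (by the Liouville-type argument already used in Lemma \ref{after}, applied with $g=0$) they are constant, hence zero. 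Therefore $\nabla u=\nabla v$, and by \eqref{trzy} there exist $v_m\in\mathcal{C}_c^\infty(\mathbb{R}^2,\mathbb{R}^2)$ with $\nabla v_m\to\nabla u$ strongly in $L^2(\mathbb{R}^2)$. Then $\int_{\mathbb{R}^2}\det\nabla v_m=0$ for every $m$ by the null-Lagrangian computation above, and passing to the limit using the $L^1$-continuity of $\det$ noted in the first paragraph yields $\int_{\mathbb{R}^2}\det\nabla u=0$.

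\textbf{Alternative route and the main obstacle.} Alternatively, one can bypass Lemma \ref{dec_approx} and use directly a density statement: that $\{\nabla\phi:\phi\in\mathcal{C}_c^\infty(\mathbb{R}^2,\mathbb{R}^2)\}$ is dense in the closed subspace of curl-free, divergence-unrestricted $L^2$ gradient fields --- but this is essentially the content of the Helmholtz/Mazur argument in Lemma \ref{dec_approx}, so it is cleaner to invoke that lemma. The only genuinely delicate point is the passage from weak to strong $L^2$ approximation of $\nabla u$: weak convergence of $\nabla v_m$ alone is \emph{not} enough, since $\det$ is only weakly continuous (weak $L^1$), and one wants an honest limit of the integrals. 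This is precisely why \eqref{trzy} in Lemma \ref{dec_approx} (or the Remark following it, establishing convergence of norms together with weak convergence) is the crucial input; with strong $L^2$ convergence in hand, the rest is the routine bilinear estimate and the exact-divergence computation. So I expect essentially no obstacle beyond correctly citing the strong-approximation part of Lemma \ref{dec_approx}.
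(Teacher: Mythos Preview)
Your proposal is correct and follows essentially the same route as the paper: apply Lemma~\ref{dec_approx} with $f=\pm\nabla u$, use Lemma~\ref{after} to conclude $\nabla w\equiv 0$, invoke the strong approximation \eqref{trzy} by $\mathcal{C}_c^\infty$ maps, and pass to the limit in the null-Lagrangian identity. You are in fact slightly more explicit than the paper in two places: you correctly take $f=-\nabla u$ to match the sign convention of \eqref{jeden} (the paper writes $f=\nabla u$, a harmless slip), and you spell out the bilinear estimate guaranteeing $L^1$-convergence of $\det\nabla v_m$, which the paper leaves implicit.
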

\begin{proof}
Since $\Delta u = \mbox{div } \nabla u$ in
$\mathcal{D}'(\mathbb{R}^2)$ we may apply Lemma \ref{dec_approx} to
$f=\nabla u \in L^2(\mathbb{R}^2)$ and write $u = v+ w$ satisfying (\ref{dwa}). 
Since $\Delta w = 0$ and $\nabla w = \nabla u - \nabla v\in L^2(\mathbb{R}^2)$, it
follows from Lemma \ref{after} that $\nabla w = 0$ and hence by
(\ref{trzy}):
$$\nabla u = \nabla v = \lim_{m\to\infty} \nabla v_m \quad \mbox{ strongly in
  $L^2(\mathbb{R}^2)$, for some }
v_m\in\mathcal{C}_c^\infty(\mathbb{R}^2, \mathbb{R}^2).$$
It remains to prove (\ref{det}) for $u\in
\mathcal{C}_c^\infty$, which is a standard argument.
Let $\mbox{supp } u \subset B_r$. We have:
\begin{equation*}
\begin{split}
\int_{\mathbb{R}^2} \det \nabla u & = \int_{B_r} (\partial_1
u^1\partial_2 u^2 - \partial_1 u^2 \partial_2 u^1) \\ & = 
 \int_{B_r} (\partial_1(
u^1\partial_2 u^2) - \partial_2(u^1 \partial_1 u^2)) =
\int_{\partial B_r} (u^1\partial_2 u^2,  u^1 \partial_1 u^2)\vec n =0, 
\end{split}
\end{equation*}
where we used integration by parts and the divergence theorem.
\end{proof}

\medskip

We finally need to recall the conformal--anticonformal decomposition of $2\times 2$
matrices.
Let $\mathbb{R}^{2\times 2}_c$ and $\mathbb{R}^{2\times 2}_a$ denote,
respectively,  the spaces of conformal and anticonformal matrices:
$$\mathbb{R}^{2\times 2}_c =
\left\{\left[\begin{array}{cc} a& b\\ -b& a\end{array}\right]; ~ a,b\in\mathbb{R}\right\},
\qquad \mathbb{R}^{2\times 2}_a 
= \left\{\left[\begin{array}{cc} a& b\\ b& -a\end{array}\right]; ~ a,b\in\mathbb{R}\right\}.$$
It is easy to see that $\mathbb{R}^{2\times 2} =
\mathbb{R}^{2\times 2}_c \oplus \mathbb{R}^{2\times 2}_a$ because both
spaces have dimension $2$ and they are mutually orthogonal: $A:B = 0$ for all
$A\in  \mathbb{R}^{2\times 2}_c$ and $B\in \mathbb{R}^{2\times 2}_a$. 

For $F=[F_{ij}]_{i,j:1,2}\in\mathbb{R}^{2\times 2}$, its projections
$F^c $ on $\mathbb{R}^{2\times 2}_c$, and $F^a$ on $\mathbb{R}^{2\times 2}_a$ are:
$$F^c= \frac{1}{2}
\left[\begin{array}{cc} F_{11} + F_{22} & F_{12} - F_{21}\\ F_{21} -
    F_{12} & F_{11} + F_{22} \end{array}\right],
\qquad 
F^a= \frac{1}{2}
\left[\begin{array}{cc} F_{11} - F_{22} & F_{12} + F_{21}\\ F_{12} +
    F_{21} & F_{22} - F_{11} \end{array}\right].$$
It follows that:
\begin{equation}\label{uno1} 
F= F^c + F^a \quad \mbox{ and } \quad |F|^2 = |F^c|^2 + |F^a|^2
\end{equation}
and, by a direct calculation:
\begin{equation}\label{due1} 
\det F= 2( |F^c|^2 - |F^a|^2).
\end{equation}
Since $SO(2) = \left\{\left[\begin{array}{cc} \cos\theta & -
      \sin\theta\\ \sin \theta& \cos\theta  \end{array}\right], ~\theta\in
    [0,2\pi)\right\}\subset \mathbb{R}^{2\times 2}_c$, it also follows that:
\begin{equation}\label{duea} 
\mbox{dist}(F, SO(2)) \geq \mbox{dist}(F,\mathbb{R}^{2\times 2}_c) =
|F^a|\end{equation}
which implies:
\begin{equation}\label{tre1} 
|\mbox{cof } F - F| = |-2 F^a| \leq 2 \mbox{dist}(F, SO(2)).
\end{equation}
Finally, recall that the cofactor matrix  in dimension $2$ is given by: 
$$\mbox{cof } F = \left[\begin{array}{cc} F_{22} & -
    F_{21}\\ -F_{12}& F_{11} \end{array}\right].$$

\medskip

We now state the following first result towards proving Theorem \ref{maxmin}.
\begin{lemma}\label{th1}
Let $u\in W^{1,2}_{loc}(\mathbb{R}^2, \mathbb{R}^2)$ and assume that 
$\mathrm{dist}(\nabla u, SO(2))\in L^2(\mathbb{R}^2,\mathbb{R})$. Then
there exists $R_0\in SO(2)$ such that:
$$ \int_{\mathbb{R}^2} |\nabla u(x) - R_0|^2~\mathrm{d}x \leq
2 \int_{\mathbb{R}^2}\mathrm{dist}^2(\nabla u(x), SO(2))~\mathrm{d}x.$$
\end{lemma}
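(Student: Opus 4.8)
The plan is to use the conformal--anticonformal decomposition together with Lemma \ref{lemdet}. Write $F(x) = \nabla u(x)$ and decompose pointwise $F = F^c + F^a$, where $F^c(x) \in \mathbb{R}^{2\times 2}_c$ and $F^a(x) \in \mathbb{R}^{2\times 2}_a$. By (\ref{duea}) we have $|F^a(x)| = \mathrm{dist}(F(x), \mathbb{R}^{2\times 2}_c) \le \mathrm{dist}(F(x), SO(2))$, so $F^a \in L^2(\mathbb{R}^2)$ by hypothesis. The key algebraic identity, for any fixed $R_0 \in SO(2) \subset \mathbb{R}^{2\times 2}_c$, is the orthogonal splitting $|F - R_0|^2 = |F^c - R_0|^2 + |F^a|^2$ from (\ref{uno1}) (since $R_0$ is conformal). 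Integrating, $\int_{\mathbb{R}^2} |F - R_0|^2 = \int_{\mathbb{R}^2} |F^c - R_0|^2 + \int_{\mathbb{R}^2} |F^a|^2$, so it suffices to find $R_0 \in SO(2)$ with $\int_{\mathbb{R}^2} |F^c - R_0|^2 \le \int_{\mathbb{R}^2} |F^a|^2$; combined with $\mathrm{dist}^2(F,SO(2)) \ge |F^a|^2$ this gives the factor-$2$ bound.

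The bridge to the anticonformal part comes from the determinant. By (\ref{due1}), $\det F = 2(|F^c|^2 - |F^a|^2)$ pointwise. On the other hand, Lemma \ref{lemdet} would give $\int_{\mathbb{R}^2} \det \nabla u = 0$ — but only once we know $\nabla u \in L^2(\mathbb{R}^2)$, which is not yet established; this is the main obstacle, addressed below. Granting $\int \det \nabla u = 0$, we get $\int_{\mathbb{R}^2} |F^c|^2 = \int_{\mathbb{R}^2} |F^a|^2 =: \rho^2 < \infty$, so in particular $F^c \in L^2(\mathbb{R}^2)$ as well. Now for $R_0 \in SO(2)$, expand $\int |F^c - R_0|^2 = \int |F^c|^2 - 2\int F^c : R_0 + \int |R_0|^2$. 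The last term $\int_{\mathbb{R}^2} |R_0|^2 = \int_{\mathbb{R}^2} 2$ is infinite, which is the wrong way to run the argument; instead I would choose $R_0$ to cancel the problematic growth. Writing $F^c(x) = \begin{bmatrix} a(x) & b(x) \\ -b(x) & a(x)\end{bmatrix}$ and $R_0 = R(-\theta_0)$ in the notation of (\ref{dodici}), we have $|F^c - R_0|^2 = 2(a - \cos\theta_0)^2 + 2(b + \sin\theta_0)^2$, which is not integrable unless $\cos\theta_0$ and $\sin\theta_0$ both vanish — impossible. So the clean statement must instead be obtained by the decomposition argument applied to $\nabla u - R_0$ for the \emph{right} $R_0$, exploiting that $\mathrm{dist}(\nabla u, SO(2)) \in L^2$ already forces $\nabla u$ to be $L^2$-close to a single fixed rotation.

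Thus the real first step is: show there exists $R_0 \in SO(2)$ such that $\nabla u - R_0 \in L^2(\mathbb{R}^2)$. Heuristically, $\mathrm{dist}(\nabla u, SO(2)) \in L^2$ together with the rigidity estimate (\ref{FJM-intro}) applied on a large ball $B_R$ gives a rotation $R_R$ with $\|\nabla u - R_R\|_{L^2(B_R)} \le \kappa_{nl}(B_R)\,\mathrm{dist}(\nabla u, SO(2))\|_{L^2(\mathbb{R}^2)}$; one checks $\kappa_{nl}(B_R)$ is scale-invariant, hence bounded uniformly in $R$, so the $R_R$ converge along a subsequence to some $R_0$ and $\nabla u - R_0 \in L^2(\mathbb{R}^2)$. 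I would prefer, however, an argument self-contained within the excerpt: apply Lemma \ref{dec_approx} to the equation $-\Delta u = \mathrm{div}\, f$ with $f = -\nabla u$... but that again presumes $\nabla u \in L^2$. The honest route is the rigidity-on-balls argument above (this is where the dependence on \cite{FJMgeo} enters only to establish the qualitative $L^2$-closeness of $\nabla u$ to some rotation, not the optimal constant). Once $G := \nabla u - R_0 \in L^2(\mathbb{R}^2)$ is known, set $\tilde u = u - R_0 x$; then $\nabla \tilde u = G \in L^2$, so Lemma \ref{lemdet} applies to $\tilde u$ and gives $\int_{\mathbb{R}^2} \det(\nabla u - R_0) = 0$. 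Decomposing $\nabla u - R_0 = H^c + H^a$ with $H^a = (\nabla u)^a = F^a$ (since $R_0$ is conformal) and $H^c = (\nabla u)^c - R_0$, identity (\ref{due1}) gives $0 = \int \det(\nabla u - R_0) = 2\int(|H^c|^2 - |H^a|^2)$, hence $\int |H^c|^2 = \int |H^a|^2 = \int |F^a|^2$. Finally, by (\ref{uno1}), $\int |\nabla u - R_0|^2 = \int |H^c|^2 + \int |H^a|^2 = 2\int |F^a|^2 \le 2\int \mathrm{dist}^2(\nabla u, SO(2))$, using (\ref{duea}). This completes the proof; the only nonroutine ingredient is the initial existence of $R_0$ with $\nabla u - R_0 \in L^2$, everything else being the algebraic identities (\ref{uno1}), (\ref{due1}), (\ref{duea}) plus Lemma \ref{lemdet}.
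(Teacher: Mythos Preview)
Your final argument is correct: once you have $R_0\in SO(2)$ with $\nabla u-R_0\in L^2(\mathbb{R}^2)$, applying Lemma~\ref{lemdet} to $\tilde u=u-R_0x$ and then the identities (\ref{uno1}), (\ref{due1}), (\ref{duea}) gives the factor $2$ exactly as in the paper.

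The genuine difference is in how $R_0$ is produced. You invoke the geometric rigidity estimate (\ref{FJM-intro}) on balls $B_R$, use scale-invariance of $\kappa_{nl}(B_R)$, and extract a limit rotation. The paper avoids this entirely: it observes that $f:=\mathrm{cof}\,\nabla u-\nabla u=-2(\nabla u)^a$ lies in $L^2(\mathbb{R}^2)$ by (\ref{tre1}), and then the Piola identity $\mathrm{div}\,\mathrm{cof}\,\nabla u=0$ yields $-\Delta u=\mathrm{div}\,f$ with $f\in L^2$. Lemma~\ref{dec_approx} then decomposes $u=v+w$ with $\nabla v\in L^2$ and $w$ harmonic; since $\nabla w=\nabla u-\nabla v$ is the sum of an $L^\infty$ part (a measurable $SO(2)$-valued selection where $\mathrm{dist}(\nabla u,SO(2))$ is small) and an $L^2$ part, Lemma~\ref{after} forces $\nabla w\equiv R_0$, and one checks $R_0\in SO(2)$. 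This route is self-contained: it does not appeal to (\ref{FJM-intro}) at all, which is precisely the point the paper stresses after Theorem~\ref{th3}. Your approach is shorter if one is willing to import the rigidity estimate as a black box, but it makes the determination of $\kappa_{nl}(\mathbb{R}^2)$ logically dependent on \cite{FJMgeo}, whereas the paper's cofactor--Liouville argument keeps the computation of the optimal constant independent of the existence proof for (\ref{FJM-intro}).
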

\begin{proof}
From the assumption $\mathrm{dist}(\nabla u, SO(2))\in L^2(\mathbb{R}^2)$
and (\ref{tre1}) we deduce: 
$$ f := \mbox{cof } \nabla u - \nabla u \in L^2(\mathbb{R}^2).$$
Taking divergence of $f$ and recalling that $\mbox{div } \mbox{cof }
\nabla u = 0$ we obtain that $-\Delta u = \mbox{div }f$. In view of
Lemma \ref{dec_approx} we now write:
\begin{equation}\label{quattro}
u = v + w
\end{equation}
where $v$ and $w$ satisfy (\ref{dwa}).
We now prove that:
\begin{equation}\label{otto}
\nabla w \equiv R_0 \in SO(2).
\end{equation}
For $\epsilon>0$ sufficiently small, define:
\begin{equation*}
g(x) = \left\{\begin{array}{ll} \mathbb{P}_{SO(2)} \nabla u(x) &
    \mbox{ if } \mbox{dist}(\nabla u(x), SO(2))<\epsilon\\
\mbox{Id} & \mbox{otherwise}\end{array} \right.
\end{equation*}
Then:
\begin{equation}\label{nove}
\nabla w = g + h; \quad g\in L^\infty(\mathbb{R}^2) \mbox{ and } h\in L^2(\mathbb{R}^2).
\end{equation}
The assertion $h= \nabla w - g = \nabla u - g + \nabla v\in L^2(\mathbb{R}^2)$
follows from the assumption $\mathrm{dist}(\nabla u, SO(2))\in L^2(\mathbb{R}^2)$ as
follows.
We already know that $\nabla v \in L^2(\mathbb{R}^2)$ by (\ref{quattro}). For
$h_1=\nabla u - g$ note that $|h_1(x)| = \mathrm{dist}(\nabla u,
SO(2))$ when $\mathrm{dist}(\nabla u(x), SO(2)) < \epsilon$, while
when $\mathrm{dist}(\nabla u(x), SO(2)) \geq \epsilon$, we have:
\begin{equation*}
\begin{split}
|h_1(x)| & = |\nabla u(x) - \mbox{Id}| \leq \mathrm{dist}(\nabla u(x),
SO(2)) + \mbox{diam} \big(SO(2)\big) \\ & \leq \left( 1 + 
\frac{\mbox{diam}\big( SO(2)\big)}{\epsilon}\right) 
\mathrm{dist}(\nabla u(x), SO(2)).
\end{split}
\end{equation*}
Since $\nabla w$ is harmonic in $\mathbb{R}^2$, (\ref{nove}) implies
that $\nabla w\equiv R_0$ is constant by Lemma \ref{after}.
But $\mathrm{dist}(R_0, SO(2))\leq \mathrm{dist}(\nabla u, SO(2)) +
|\nabla v|\in L^2(\mathbb{R}^2)$, so $ R_0\in SO(2)$ and (\ref{otto}) is now established.

\smallskip

By (\ref{otto}) and (\ref{quattro}) we have:
\begin{equation}\label{dieci}
\nabla u = \nabla v + R_0.
\end{equation}
Since $\int \det\nabla v =0$ by Lemma \ref{lemdet}, we obtain by
(\ref{due1}):
\begin{equation}\label{undici}
\int_{\mathbb{R}^2} |(\nabla v)^c|^2 = \int_{\mathbb{R}^2} |(\nabla v)^a|^2.
\end{equation}
Consequently:
\begin{equation*}
\begin{split}
\int_{\mathbb{R}^2} |\nabla u - R_0|^2 & = \int |\nabla
v)|^2 = \int |(\nabla v)^c|^2 + \int |(\nabla v)^a|^2 = 2 \int |(\nabla v)^a|^2 \\ &
= 2 \int |(\nabla v + R_0)^a|^2 \leq 2
\int \mbox{dist}^2 (\nabla v + R_0, SO(2)) \\ & 
= 2 \int_{\mathbb{R}^2} \mbox{dist}^2 (\nabla u, SO(2)), 
\end{split}
\end{equation*}
where we used (\ref{dieci}), (\ref{uno1}), (\ref{undici}), (\ref{duea})
and the fact that $(R_0)^a = 0$. This achieves the proof.
\end{proof}

\medskip

\noindent{\bf Proof of Theorem \ref{th2}}

\noindent {\bf 1.} Without loss of generality we may assume that $R_0 =
\mbox{Id}$. We shall look for a function $u\in
W^{1,2}_{loc}(\mathbb{R}^2, \mathbb{R}^2)$ such that:
\begin{equation}\label{dodici2}
\nabla u(x) = R(\alpha(x)) + \left[\begin{array}{cc} a(x) & b(x)\\
b(x) & -a(x)\end{array}\right], \quad \mbox{with } R(\alpha) = 
\left[\begin{array}{cc} \cos \alpha & -\sin\alpha\\
\sin\alpha & \cos\alpha\end{array}\right],
\end{equation}
and:
\begin{equation}\label{tredici}
\nabla u(x) -\mbox{Id} \in L^2(\mathbb{R}^2)
\end{equation}
Indeed, note that by Lemma \ref{lemdet}, (\ref{tredici}) and
(\ref{due1}):
$$\int_{\mathbb{R}^2} |(\nabla u)^c - \mbox{Id}|^2 = \int_{\mathbb{R}^2} |(\nabla u)^a|^2.$$
Hence, by (\ref{uno1}):
$$\int_{\mathbb{R}^2} |\nabla u - \mbox{Id}|^2 = 2\int_{\mathbb{R}^2}
|(\nabla u)^a|^2 = 2\int_{\mathbb{R}^2} \mathrm{dist}^2(\nabla u, SO(2)).$$
because $(\nabla u)^c = R(\alpha) \in SO(2)$. Since $(\nabla
u)^a = \left[\begin{array}{cc} a & b\\ b & -a\end{array}\right]$ it also
follows that $\int_{\mathbb{R}^2} \mathrm{dist}^2(\nabla u, SO(2)) = 2 
\int_{\mathbb{R}^2} (a^2 + b^2)$.

On the other hand, there is always the unique rotation $R$ which makes
the quantity in the left hand side of (\ref{attain}) finite:
$$\int_{\mathbb{R}^2} |\nabla u - R|^2 \geq \frac{1}{2}
\int_{\mathbb{R}^2} | R - \mbox{Id}|^2  - \int_{\mathbb{R}^2} |\nabla u - \mbox{Id}|^2. $$
This proves the theorem, provided (\ref{dodici2}) and (\ref{tredici})
hold.

\smallskip 

{\bf 2.} We shall show that for any $\alpha\in L^2(\mathbb{R}^2,
\mathbb{R})$ there exists a vector field $g = (a, b)^T \in L^2(\mathbb{R}^2,
\mathbb{R}^2)$ satisfying (\ref{dodici2}). Then (\ref{tredici}) will
follow automatically, as:
$$\int |R(\alpha) - \mbox{Id}|^2 = 2\int (\cos\alpha -1)^2 +
(\sin\alpha)^2 = 2\int (2-2\cos\alpha) \leq 2 \int |\alpha|^2.$$
The last inequality above follows by noting that the function
$\alpha\mapsto \alpha^2 + 2\cos\alpha - 2$ attains its  minimum value
$0$ at $\alpha=0$, since $(\alpha^2 + 2\cos\alpha - 2)' = 2(\alpha
-\sin\alpha)$
is positive for $\alpha>0$ and negative for $\alpha<0$.

Fix $\alpha\in L^2(\mathbb{R}^2)$. The map  $u\in
W^{1,2}_{loc}(\mathbb{R}^2, \mathbb{R}^2)$ with $\nabla u$ of
the form (\ref{dodici}) exists if and only if the right hand side in
(\ref{dodici}) is curl-free, i.e.:
\begin{equation}\label{quattordici}
\left\{ \begin{array}{ll}  \mbox{curl } g = \mbox{div } f & \qquad
    \mbox{in } \mathcal{D}'(\mathbb{R}^2)\\ \mbox{div } g = \mbox{curl
    } f & \end{array}\right.
\end{equation}
where:
$$f = (\sin\alpha,~ \cos\alpha -1)^T\in L^2(\mathbb{R}^2, \mathbb{R}^2).$$
The system (\ref{quattordici}) can be solved by Fourier transform,
namely:
\begin{equation}\label{quindici}
g=\mathcal{F}^{-1}(h), \qquad h(x) = - \left\langle \frac{x^\perp}{|x|},
\mathcal{F}(f)(x)\right\rangle \frac{x}{|x|} + \left\langle \frac{x}{|x|},
\mathcal{F}(f)(x)\right\rangle \frac{x^\perp}{|x|},
\end{equation}
where $x^\perp = (-x_2, x_1)$. Here $\mathcal{F}$ stands for the
Fourier transform of $L^2(\mathbb{R}^2, \mathbb{C})$ and we identify
the complex variable functions with the $\mathbb{R}^2$-valued vector fields.

Note that from (\ref{quindici}) it follows that: 
\begin{equation}
\begin{array}{ll}
\forall x\in\mathbb{R}^2 & \langle \mathcal{F}(g)(x), x^\perp\rangle = 
\langle \mathcal{F}(f)(x), x\rangle\\
& \langle \mathcal{F}(g)(x), x\rangle = 
- \langle \mathcal{F}(f)(x), x^\perp\rangle
\end{array}
\end{equation}
which precisely implies (\ref{quattordici}).
Therefore, for every $f\in L^2(\mathbb{R}^2)$ there exists a unique $g\in L^2(\mathbb{R}^2)$
solving (\ref{quattordici}). This achieves the proof of the
theorem. Moreover: 
$$ \|g\|_{L^2} = \|h\|_{L^2} = \|\mathcal{F}(f)\|_{L^2}
= \|f\|_{L^2},$$
by Plancherel identity and by inspecting
(\ref{quindici}). 
\endproof

\medskip

This concludes the proof of Theorem \ref{maxmin} as well. In view of
the argument in the above proof, Theorem \ref{th3} will follow in view of:

\begin{lemma}
If $\int |\nabla u - R_0|^2 = 2\int\mathrm{dist}^2(\nabla u, SO(2)) <
\infty$ then $\nabla u$ must be of the form (\ref{dodici2}) with
$R(\alpha) - R_0 \in L^2(\mathbb{R}^2).$
\end{lemma}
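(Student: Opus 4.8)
The plan is to reverse the argument in the proof of Lemma~\ref{th1}, using the equality case in each inequality that was used there. Starting from the hypothesis $\int|\nabla u - R_0|^2 = 2\int\mathrm{dist}^2(\nabla u, SO(2)) < \infty$, I first note that $\mathrm{dist}(\nabla u, SO(2))\in L^2(\mathbb{R}^2)$, so Lemma~\ref{th1} applies and, more importantly, the whole machinery of its proof is available: writing $f = \mathrm{cof}\,\nabla u - \nabla u \in L^2$, we have $-\Delta u = \mathrm{div}\, f$, and Lemma~\ref{dec_approx} gives the decomposition $u = v + w$ with $\nabla v\in L^2$ the strong limit of $\nabla v_m$ for $v_m\in\mathcal{C}_c^\infty$, and $\nabla w$ harmonic. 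The same argument as in Lemma~\ref{th1} (via the auxiliary function $g$ built out of $\mathbb{P}_{SO(2)}\nabla u$) shows $\nabla w \equiv R_1$ for some constant matrix, and then $\mathrm{dist}(R_1, SO(2)) \le \mathrm{dist}(\nabla u, SO(2)) + |\nabla v| \in L^2$ forces $R_1 \in SO(2)$. Thus $\nabla u = \nabla v + R_1$ with $\int\det\nabla v = 0$ by Lemma~\ref{lemdet}, hence $\int|(\nabla v)^c|^2 = \int|(\nabla v)^a|^2$ by~(\ref{due1}).

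Next I trace the chain of estimates from the end of the proof of Lemma~\ref{th1}, now run with $R_1$ in place of the abstract $R_0$ there:
\begin{equation*}
\int|\nabla u - R_1|^2 = \int|\nabla v|^2 = 2\int|(\nabla v)^a|^2 = 2\int|(\nabla v + R_1)^a|^2 \le 2\int\mathrm{dist}^2(\nabla v + R_1, SO(2)) = 2\int\mathrm{dist}^2(\nabla u, SO(2)).
\end{equation*}
Combining this with the hypothesis shows first that $R_1$ realizes the minimum over $SO(2)$ of $\int|\nabla u - R|^2$ (which is unique by the argument already given in the proof of Theorem~\ref{th2}, part~1), so $R_1 = R_0$; and second that the inequality $\int|(\nabla v + R_0)^a|^2 \le \int\mathrm{dist}^2(\nabla v + R_0, SO(2))$ is an \emph{equality}. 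The key point is then to unpack this equality: pointwise $|(\nabla u)^a| = |F^a| \le \mathrm{dist}(F, SO(2))$ with $F = \nabla u(x)$, with equality for a.e.\ $x$. Equality in~(\ref{duea}), i.e.\ $\mathrm{dist}(F, SO(2)) = \mathrm{dist}(F, \mathbb{R}^{2\times 2}_c) = |F^a|$, happens exactly when the conformal part $F^c$ is itself a rotation (and not merely a scalar multiple of one that is too small), equivalently when $\det F \ge 0$ — more precisely when $|F^c| \ge 1$, so that the nearest point of $SO(2)$ to $F^c$ is $F^c/|F^c|$ and $\mathrm{dist}(F,SO(2))^2 = (|F^c|-1)^2 + |F^a|^2$... but equality $\mathrm{dist}(F,SO(2)) = |F^a|$ then forces $|F^c| = 1$, i.e.\ $F^c \in SO(2)$. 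Hence for a.e.\ $x$ the conformal part $(\nabla u(x))^c$ lies in $SO(2)$, which is precisely the statement that $\nabla u(x) = R(\alpha(x)) + \begin{bmatrix} a(x) & b(x) \\ b(x) & -a(x)\end{bmatrix}$ for a measurable $\alpha(x)$; writing $(\nabla u)^c = R(\alpha)$ and $(\nabla u)^a = \begin{bmatrix} a & b \\ b & -a\end{bmatrix}$ gives the form~(\ref{dodici2}).

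It remains to check the integrability claims: $a, b \in L^2(\mathbb{R}^2)$ follows because $|(\nabla u)^a|^2 = a^2 + b^2$ and $(\nabla u)^a = (\nabla v)^a \in L^2$; and $R(\alpha) - R_0 = (\nabla u)^c - R_0 = (\nabla v)^c$ (using $R_0^a = 0$, so $(\nabla u)^c - R_0 = (\nabla v + R_0)^c - R_0 = (\nabla v)^c$) lies in $L^2(\mathbb{R}^2)$ since $\nabla v \in L^2$. The main obstacle is the pointwise equality-case analysis for $\mathrm{dist}(\cdot, SO(2))$ versus $|\cdot^a|$: one has to argue carefully that an $L^2$ equality of nonnegative integrands, each term of which satisfies the pointwise inequality~(\ref{duea}), forces pointwise equality a.e., and then extract that $|F^c| = 1$ a.e.\ rather than only $|F^c| \ge$ some bound — this uses that $\mathrm{dist}(F,SO(2))^2 = (|F^c| - 1)^2 + |F^a|^2$ whenever $|F^c| > 0$ (and $= |F^c|^2 + |F^a|^2 = 1 + |F^a|^2$ absurdly large if $F^c = 0$, which can't give equality with $|F^a|$ unless trivial), so $\mathrm{dist}(F,SO(2)) = |F^a|$ iff $|F^c| = 1$. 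All other steps are direct reruns of arguments already in the excerpt.
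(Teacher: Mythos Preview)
Your proof is correct, but it takes a longer route than the paper's. You rerun the full decomposition $u = v + w$ from Lemma~\ref{th1}, identify the harmonic part as a rotation $R_1$, argue $R_1 = R_0$ via uniqueness, and then do a pointwise equality analysis in the inequality $|F^a| \le \mathrm{dist}(F,SO(2))$. The paper instead works directly with $\nabla u - R_0$, which is already in $L^2$ by hypothesis: applying Lemma~\ref{lemdet} to $u(x) - R_0 x$ gives $\int|(\nabla u)^c - R_0|^2 = \int|(\nabla u)^a|^2$ immediately, with no need for the $v+w$ machinery. The paper then uses the clean pointwise identity
\[
\mathrm{dist}^2(F,SO(2)) = \mathrm{dist}^2(F^c,SO(2)) + |F^a|^2,
\]
integrates, and subtracts to obtain $\int\mathrm{dist}^2((\nabla u)^c, SO(2)) = 0$ in one line. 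Your equality-case analysis of~(\ref{duea}) is essentially a rediscovery of this identity. Both arguments land on the same conclusion $(\nabla u)^c \in SO(2)$ a.e.; the paper's is shorter because it never needs to name $R_1$ or invoke the uniqueness of the minimizing rotation. Incidentally, once you know $\nabla u - R_1 = \nabla v \in L^2$ and $\nabla u - R_0 \in L^2$, the identification $R_1 = R_0$ is immediate from $R_0 - R_1 \in L^2(\mathbb{R}^2)$, without appealing to Theorem~\ref{th2}.
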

\begin{proof}
Note that by (\ref{uno1}): $|\nabla u - R|^2 = |(\nabla u)^c - R|^2 +
|(\nabla u)^a|^2$ for any $R\in SO(2)$. Hence taking infimum over
all rotations, we get:
\begin{equation}\label{sth}
\mbox{dist}^2(\nabla u, SO(2)) = \mbox{dist}^2((\nabla u)^c, SO(2)) +
|(\nabla u)^a|^2.
\end{equation}
In particular: 
\begin{equation*}
(\nabla u)^a\in L^2(\mathbb{R}^2).
\end{equation*}
Further, by (\ref{due1}) and Lemma \ref{lemdet}:
$$\int |(\nabla u)^c - R_0|^2 = \int |(\nabla u)^a|^2.$$
Therefore, by (\ref{uno1}) and (\ref{sth}):
\begin{equation*}
\begin{split}
\int  |(\nabla u)^c - R_0|^2 & = \frac{1}{2} \int  |\nabla u - R_0|^2 = 
\int \mbox{dist}^2(\nabla u, SO(2)) \\ & = \int \mbox{dist}^2((\nabla u)^c, SO(2)) +
\int |(\nabla u)^a|^2 \\ & = \int \mbox{dist}^2((\nabla u)^c, SO(2)) +
\int  |(\nabla u)^c - R_0|^2,
\end{split}
\end{equation*}
which implies that $\int \mbox{dist}^2((\nabla u)^c, SO(2)) =0$ and
hence: $(\nabla u(x))^c\in SO(2)$ for a.e. $x$.
Consequently, $\nabla u$ has the form in (\ref{dodici2}) and:
$$ R(\alpha) - R_0 = \nabla u - R_0 - (\nabla u)^a \in L^2(\mathbb{R}^2)$$
by (\ref{sth}).
\end{proof}

\bigskip

\noindent{\bf Acknowledgments.} 
M.L. was partially supported by the NSF grants DMS-0707275 and DMS-0846996,
and by the Polish MN grant N N201 547438.

\end{document}